\pdfoutput=1
\RequirePackage{rotating}
\documentclass[oneside,a4paper]{article}

\usepackage{amsbsy,amsfonts,amssymb,amsmath,amsthm}
\usepackage{dsfont}
\usepackage{rotating}
\usepackage{textcomp}

\normalsize
\evensidemargin=0pt
\oddsidemargin=38.5pt
\hoffset=0pt
\voffset=0pt
\textwidth 375pt
\setlength{\textheight}{569.6788pt}
\marginparsep=0pt
\headheight 11pt
\headsep 19pt
\footskip 24pt
\parindent 12pt
\topskip 10pt
\abovedisplayskip=10pt plus 2pt minus 8pt
\belowdisplayskip=10pt plus 2pt minus 8pt
\belowdisplayshortskip=5pt plus 3pt minus4pt

\newtheorem{theorem}{Theorem}
\newtheorem{proposition}{Proposition}
\newtheorem{lemma}{Lemma}
\newtheorem{corollary}{Corollary}

\newcommand{\keywords}[1]
           {\begin{center}
            \begin{minipage}{315.83pt}
            \small
            \noindent \emph{Keywords:}~{\textrm{#1}}
            \end{minipage}
            \end{center}
            \normalsize
           }
           

\newcommand{\ams}[2]
           {\begin{center}
            \begin{minipage}{315.83pt}
            \small
            \noindent \emph{2010 MSC:}~Primary {\uppercase{#1}}\\
            \phantom{\emph{2010 MSC:}~}Secondary {\uppercase{#2}}
            \end{minipage}
            \end{center}
            \par\normalsize
           }

\title{A general class of mosaic random fields} 

\author{%
Dimitri Schwab\footnote{Institut f\"{u}r Mathematik, Universit\"{a}t Mannheim, 68131 Mannheim, Germany, Email adress: dschwab@mail.uni-mannheim.de}
\and Martin Schlather\footnote{Institut f\"{u}r Mathematik, Universit\"{a}t Mannheim, 68131 Mannheim, Germany, Email adress: schlather@math.uni-mannheim.de}
\and J\"{u}rgen Potthoff\footnote{Institut f\"{u}r Mathematik, Universit\"{a}t Mannheim, 68131 Mannheim, Germany, Email adress: potthoff@math.uni-mannheim.de}
}

\date{\today}



\newcommand{\R}{\mathbb{R}}
\renewcommand{\S}{\mathbb{S}^2}
\newcommand{\1}{\mathds{1}}

\newcommand{\V}{\operatorname{Var}}
\newcommand{\sgn}{\mathop{\mathrm{sgn}}}
\newcommand{\N}{\mathbb{N}}
\newcommand{\I}{\mathbb{I}}
\newcommand{\E}{\mathbb{E}}
\DeclareMathOperator\artanh{artanh}
\newcommand\numberthis{\addtocounter{equation}{1}\tag{\theequation}}
\newlength{\mylength}
\settowidth{\mylength}{9. $\rho(x,y)=\bigg($}
\newlength{\mylengthzwei}
\settowidth{\mylengthzwei}{6. $\rho(x,y)=$}


\begin{document}

\maketitle

\begin{abstract}
We present a model of a random field on a topological space $M$ that unifies well-known models such as the Poisson hyperplane tessellation model, the random token model, and the dead leaves model. In addition to generalizing these submodels from $\R^d$ to other spaces such as the $d$-dimensional unit sphere $\mathbb{S}^d$, our construction also extends the classical models themselves, e.g.\ by replacing the Poisson distribution by an arbitrary discrete distribution. Moreover, the method of construction directly produces an exact and fast simulation procedure. By investigating the covariance structure of the general model we recover various explicit correlation functions on $\R^d$ and $\mathbb{S}^d$ and obtain several new ones.
\end{abstract}

\keywords{Random Field; Random Mosaic; Covariance Function; Simulation; Sphere} 


\ams{60G60}{60D05; 51M20; 60G15}

\section{Introduction}

A mosaic is a partitioning of some set $M$ into disjoint subsets $C_i$, called \emph{cells}. A corresponding random field $(Z(x),x\in M)$ can be obtained by assigning to each cell $C_i$ a random variable $V_i$ and setting $Z(x)=V_i$ for all $x\in C_i$. The random variables $V_i$ are not necessarily independent nor identically distributed. We call $Z$ a \emph{mosaic random field}. Usually the mosaic itself is also chosen to be random and independent of the random variables $V_i$. 

An important example is the mosaic build from a Poisson hyperplane tessellation in $\R^d$ \cite{Miles69, Mat72, Mat75} and the corresponding random field \cite{Chi09, Lantue02}. Let $\mathbb{S}^{d-1}_+$ denote the upper unit hemisphere, i.e.\ the set consisting of all $x\in\R^d$ with $\|x\|=1$ and $x_d\geq 0$. Given a Poisson point process $\Pi$ in $\mathbb{S}^{d-1}_+\times\R$, for each point $(x,r)$ of a realization of $\Pi$ a hyperplane with normal vector $\sgn(r) x$ pointing from the origin to the hyperplane and distance $|r|$ from the origin is drawn. The cells are the polytops delimited by this network of random hyperplanes and the random field is defined by assigning to each cell a different random variable from an independent and identically distributed (i.i.d.)\ sequence $(U_i,i\in\N)$. Due to the Poisson distributed number of hyperplanes this random field possesses an exponential covariance function.

Another well-known model is the random token model in $\R^d$ \cite{Chi09, Lantue02}. Here bounded subsets or tokens are placed at the points of a Poisson point process and to each token $B_i$ a random variable is associated from an i.i.d.\ sequence $(U_i,i\in\N)$. At each location $x$ the random field is then defined to be the sum of all random variables $U_i$ that are associated to tokens containing $x$. This model was first introduced as the random coin model by Sironvalle \cite{Siro80}, where the tokens are balls with random diameter, and it was used among others to model a random field with spherical covariance function \cite{Siro80, Chi09, Lantue02}. It can be viewed as a mosaic random field as well: If $n\in\N$ tokens are drawn, for any $I\subseteq\{1,\dots,n\}$ the cell $C_I$ on which the random field is constant consists of all the points contained in $\bigl(\bigcap_{i\in I}B_i\bigr)\bigcap\,\bigl(\bigcap_{j\in I^{c}}B_j^{c}\bigr)$, where $I^{c}$ denotes the complement of $I$ in $\{1,\dots,n\}$ and $B^{c}$ denotes the complement of $B$ in $M$. Putting it this way, to each cell $C_I$ the random variable $V_I=\sum_{i\in I} U_i$ is assigned so that the random variables associated to different cells might be dependent in contrast to the hyperplane random field. 

Mosaic random fields can be used to model phenomena that show a piecewise constant behavior over subdomains of $\R^d$. On the other hand, a suitable normalized sum of independent copies of a mosaic random field converges by the central limit theorem to a Gaussian random field with the same covariance function as the number of copies increases to infinity. In case of finite third-order absolute moments of the marginals, the Berry-Ess\'{e}en Theorem (e.g., \cite{Fel66}) provides an upper bound for the absolute difference between the marginal distribution of the normalized sum and a standard Gaussian distribution.

A convenient feature of mosaic random fields is their concrete structure which directly suggests a simulation procedure. In order to generate samples, it suffices to sample a finite number of independent random numbers from univariate distributions to build the underlying mosaic, and a finite number of random numbers for the values associated to the cells. Once this is done, the exact value of the simulation at each location $x$ can be computed by determining the cell that contains $x$. 

So far, many models and simulation methods have been developed in $\R^d$ (see \cite{PorMonSch12} for an overview), while for any other space $M$, e.g.\ the sphere, only few models are available \cite{BolLin11, Jun07, Jun08, LanSch15}. The two-dimensional sphere is of particular interest for applications. In geosciences, spatial data collected by satellites often cover a large portion of the globe (e.g., \cite{McPet96}) and the analysis of such data sets requires random fields and covariance models indexed by $\S$. Furthermore, random fields on the sphere serve as radial functions for star-shaped random sets \cite{Han15}. An advantage of mosaic models is that the construction of a mosaic random field does not require much specific structure of the underlying space and they are therefore also applicable to the sphere. 

The mosaics in the present paper are build by intersections of a random number of random sets in a topological space $M$ (cf.\ section \ref{sec:Model}). The procedure that assigns random variables to the cells of the mosaic determines the type of submodel. This sequential construction allows for a step-by-step investigation of the covariance structure of the resulting mosaic random field. The benefit of this is the ability to chose any combination of the three characteristics (i) random set, (ii) random number of sets, and (iii) assignment procedure, and observe the resulting correlation function and the dependence of the correlation function on the characteristics. This way, we obtain and recover a large number of very different correlation functions on bounded subsets of $\R^d$ and on $\mathbb{S}^d$ for the general mosaic random field. As an example, we find the generalized Cauchy correlation function for the mosaic random field in $\R^d$ or $\mathbb{S}^d$ that is constructed by letting the number of random sets follow a compound negative binomial distribution, taking half-spaces or hemispheres as random sets, and assigning i.i.d.\ random variables to the cells of the resulting mosaic.

\section{The Model}\label{sec:Model}

We consider a random field $Z=(Z(x),x\in M)$ on a second countable locally compact Hausdorff space $M$ equipped with its Borel $\sigma$-algebra. Let $N$ be an $\N_0$-valued random variable, not almost surely equal to zero, and $(U_{i,j},i,j\in\N)$ a doubly indexed i.i.d.\ sequence of real-valued random variables with finite variances. Let $(B_n,n\in\N)$ be an i.i.d.\ sequence of random closed sets in $M$ \cite{Mat75}. We assume that the family formed by $N$, $(U_{i,j},i,j\in\N)$, and $(B_n,n\in\N)$ is independent. The random variables $U$ and $B$ refer to a generic member of the sequences $(U_{i,j},i,j\in\N)$ and $(B_n,n\in\N)$, respectively.

Let $\mathcal{P}_n$ denote the power set of $\{1,\dots,n\}$. For every $n\in\N$ we define the family $(C_I, I\in\mathcal{P}_n)$ of disjoint random subsets of $M$ by
\begin{align*}
C_I=\Biggl(\bigcap_{i\in I}B_i\Biggr)\bigcap\,\Biggl( \bigcap_{j\in\{1,\dots,n\}\setminus I}B_j^{c}\Biggr).
\end{align*}
For $n=0$ we define $(C_I,I\in\mathcal{P}_0)=(C_{\emptyset})$ by $C_{\emptyset}=M$. We call $C_I$ a \emph{random cell of $M$}.

Let $\mathcal{P}^*(\N)$ denote the set consisting of all finite subsets of $\N$ and let a function $g:\mathcal{P}^*(\N)\rightarrow\N$ be given. Suppose $(\I_I,I\in\mathcal{P}_n)$, $n\in\N_0$, are families of elements of $\mathcal{P}^*(\N)$. We generalize the Poisson hyperplane tessellation model, the random token model, and the dead leaves model as follows:
\begin{align}\label{allgFeld}
Z(x)=\sum_{I\in\mathcal{P}_N}\Biggl(\sum_{j\in\I_I}U_{g(I),j}\Biggr)\1_{x\in C_I},\quad x\in M.
\end{align}
We call the random field $Z$ \emph{simple mosaic random field} and we write $Z_M$ instead of $Z$, when $g$ is an injection of $\mathcal{P}^*(\N)$ and $\I_I=\{1\}$ for all $I\in\mathcal{P}^*(\N)$. In this case there exists an i.i.d.\ sequence $\left(U_I,I\in\mathcal{P}^*(\N)\right)$ such that we have
\begin{align}\label{simplemosaic}
Z_M(x)\overset{d}{=}\sum_{I\in\mathcal{P}_N}U_I\,\1_{x\in C_I},\quad x\in M,
\end{align}
where the equality is in the sense of distribution. If $M=\R^d$, the sets $B_n$ are half-spaces determined by random hyperplanes in $\R^d$, and $N$ is taken to be Poisson distributed, then $Z_M$ is the Poisson hyperplane tessellation model in $\R^d$.

The choices $\I_I=I$, $I\in\mathcal{P}^*(\N)$, and $g\equiv 1$ in \eqref{allgFeld} lead to the field
\begin{align}\label{RCfeld}
Z_{RT}(x)\overset{d}{=}\sum_{I\in\mathcal{P}_N}\Biggl(\sum_{i\in I} U_{i}\Biggr)\mathds{1}_{x\in C_I},\quad x\in M,
\end{align}
where in this case $(U_i,i\in\N)$ is an i.i.d.\ sequence. Here each random closed set $B_n$ is associated with a random variable and to each point $x\in M$ we assign the sum of all random variables associated to random closed sets containing $x$. The random field $Z_{RT}$ is called \emph{random token field} if $M=\R^d$ and we keep the name for general $M$ as above. 

In a third example, we take $g$ from the simple mosaic random field and $(\I_I,I\in\mathcal{P}_n)$, $n\in\N_0$, from the random token field and get
\begin{align}\label{Mischungfeld}
Z_{MRT}=\sum_{I\in\mathcal{P}_N}\Biggl(\sum_{i\in I}U_{g(I),i}\Biggr)\1_{x\in C_I},\quad x\in M.
\end{align}
We call $Z_{MRT}$ \emph{mixture random field}.

In the \emph{dead leaves model} (e.g.,  \cite{Chi09,Lantue02}), the random sets $(B_n,n\in\N)$ are placed sequentially in $M$, partially overlapping previously placed random sets. The corresponding random field $Z_{DL}$ is defined at each $x\in M$ as $Z_{DL}(x)=U$ for the random variable $U$ associated to the latest random set covering $x$. In our setup, this random field corresponds to the choices $\I_I=\{1\}$ and $g(I)=\1_{I\neq\emptyset}\max I $ for all $I\in\mathcal{P}^*(\N)$, such that
\begin{align}\label{deadleaves}
Z_{DL}(x)\overset{d}{=}\sum_{I\in\mathcal{P}_N} U_{g(I)}\,\1_{x\in C_I},\quad x\in M,
\end{align}
for an i.i.d.\ sequence $(U_i,i\in\N_0)$. 

Realizations of different mosaic random fields on $[-1,1]^2$, on the sphere, and on the torus are illustrated in figure \ref{figureone}. Figure \ref{figuretwo} displays the weighted sum of $n=10,100$, and $200$ realizations of a mosaic random field on the sphere.

\begin{figure}[t]
\includegraphics[width=\textwidth]{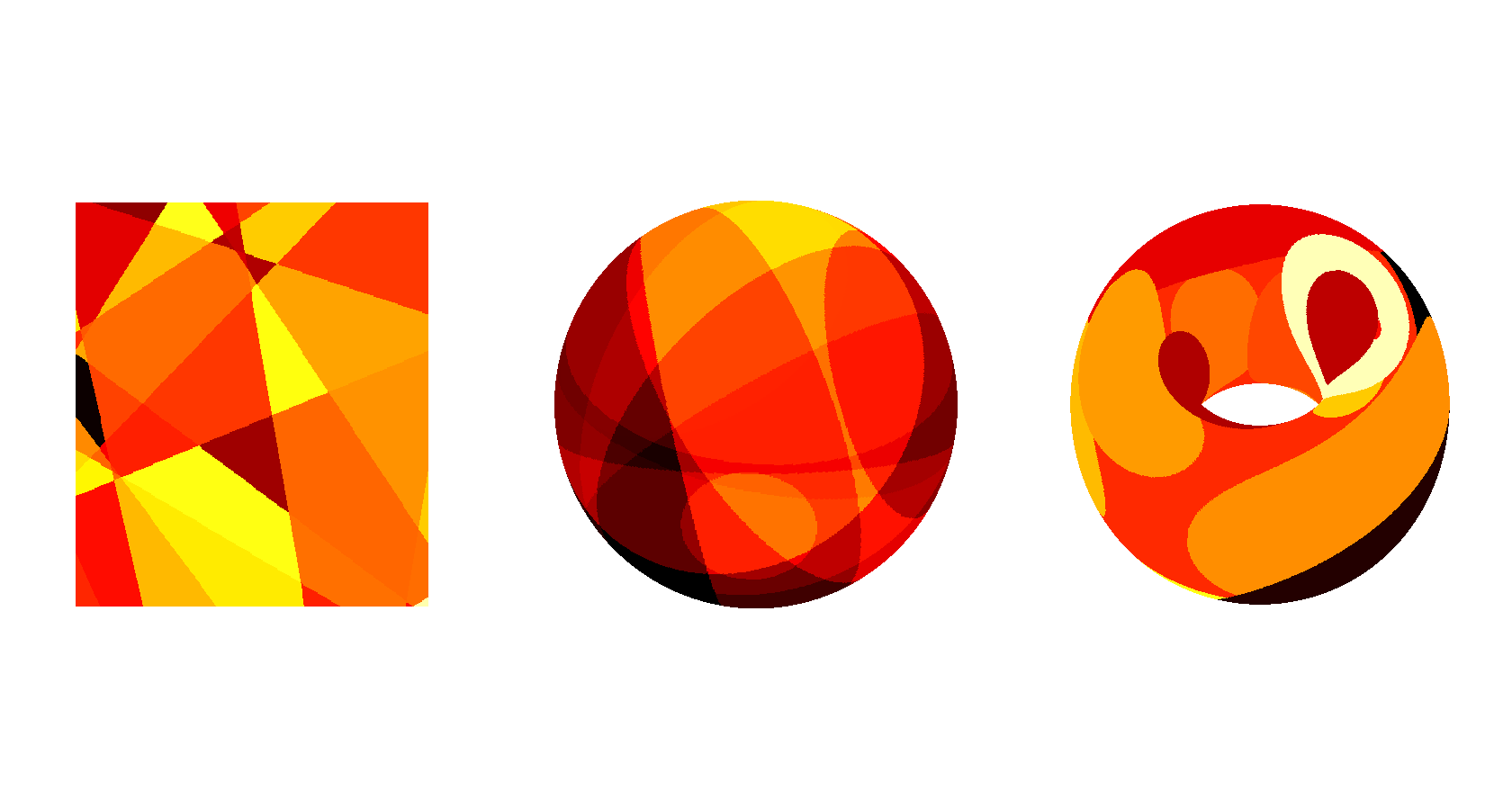}
\caption{From left to right: simple Mosaic with random half-spaces on $[-1,1]^2$, random token with random balls on the sphere, dead leaves with random balls on the torus.}\label{figureone}
\end{figure}

\begin{figure}[t]
\includegraphics[width=\textwidth, height=180px]{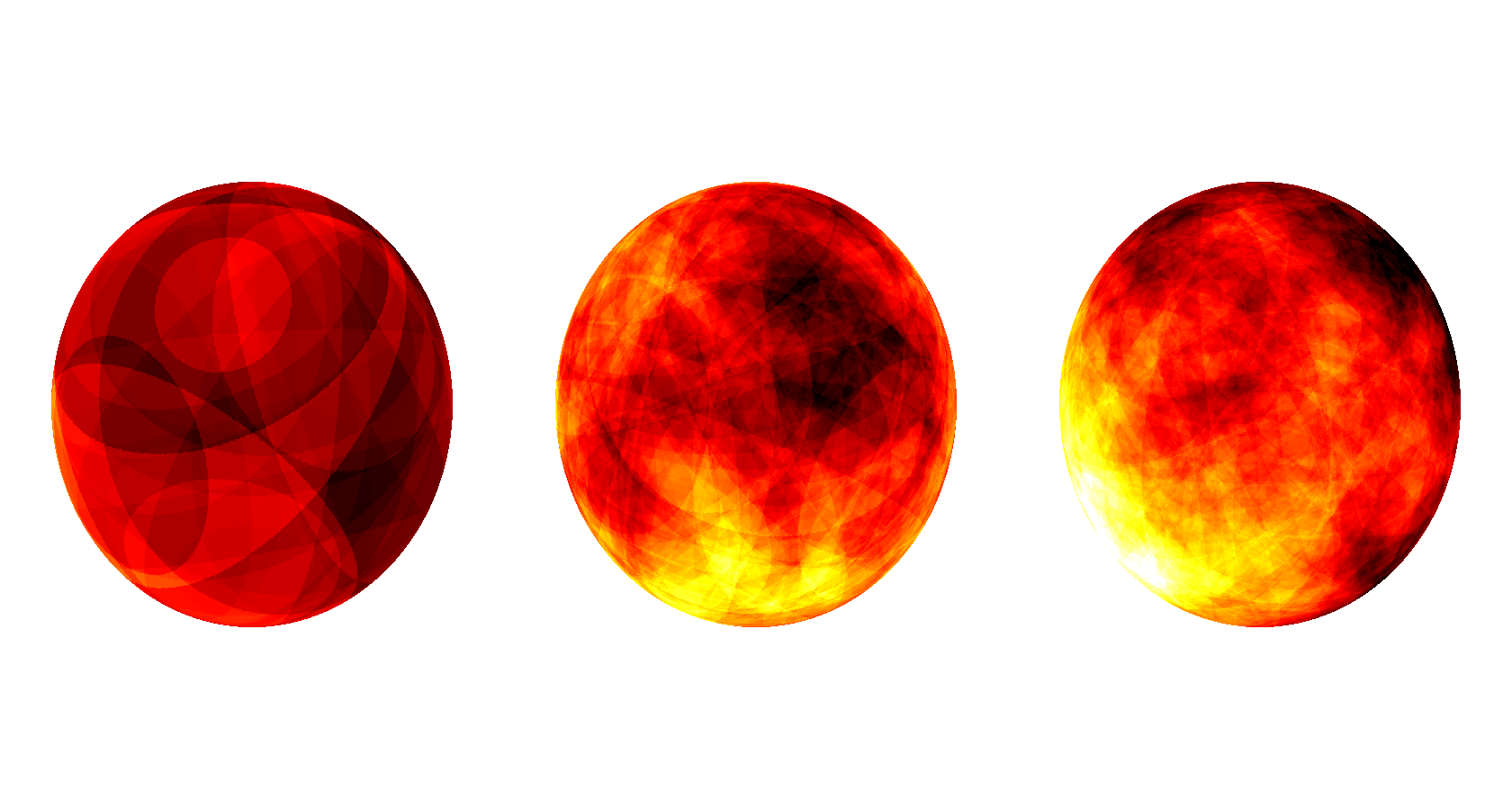}
\caption{Weighted sum of $n=10,100$, and $200$ realizations of a random token field on the sphere.}\label{figuretwo}
\end{figure}

In order to get reasonable analytic formulae for the covariance function of $Z$, we assume that there exist functions $f_n:\N_0^2\rightarrow\N_0$, $n\in\N_0$, such that for all $n\in\N_0$
\begin{align}\label{eq:assumption}
|\I_I\cap\I_J |=f_n\bigl(|I \cap J|, |I\bigtriangleup J|\bigr)\quad\text{for all }I,J\in\mathcal{P}_n,
\end{align}
holds, where $\bigtriangleup$ denotes the symmetric difference of two sets. In the following we present a class of functions $f_n$ for which we can construct families $(\I_I,I\in\mathcal{P}_n)$, $n\in\N_0$, such that \eqref{eq:assumption} holds. The functions corresponding to $Z_M, Z_{RT}, Z_{MRT}$, and $Z_{DL}$ above are given by $f_n(i,j)=1$, and $f_n(i,j)=i$, respectively, and they are included in the following class. 
\begin{lemma}
Suppose that $(a_n,n\in\N_0)$, $(b_n,n\in\N_0)$, and $(c_n,n\in\N_0)$ are sequences such that for every $n\in\N_0$, $a_n\in\mathbb{Z}$, $b_n,c_n\in\N_0$ holds. Assume furthermore that for all $n\in\N_0$, $a_n\geq -b_n$, $c_n\geq nb_n$ holds true, and set $f_n(i,j)=a_ni-b_nj+c_n$, $i,j\in\N_0$. Then there are families $(\I_I,I\in\mathcal{P}_n)$, $n\in\N_0$, such that \eqref{eq:assumption} holds.
\end{lemma}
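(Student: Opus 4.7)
The plan is to build each $\I_I$ out of three kinds of pairwise disjoint finite blocks in $\N$: a common pool $A$ always included, an ``on'' block $P_i$ included precisely when $i\in I$, and an ``off'' block $Q_i$ included precisely when $i\notin I$ (one pair $(P_i,Q_i)$ for each $i\in\{1,\dots,n\}$). With this layout $\I_I\cap\I_J$ consists exactly of $A$, the $P_i$ with $i\in I\cap J$, and the $Q_i$ with $i\notin I\cup J$. Choosing $|A|=\alpha$, $|P_i|=p$ and $|Q_i|=q$ constant in $i$ therefore makes $|\I_I\cap\I_J|$ depend only on $|I\cap J|$ and $|\{1,\dots,n\}\setminus(I\cup J)|$, hence only on $|I\cap J|$ and $|I\bigtriangleup J|$, which already matches the symbolic form demanded by \eqref{eq:assumption}.

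The next step is to match the three free constants $\alpha,p,q$ to the prescribed $f_n$. Using $|\{1,\dots,n\}\setminus(I\cup J)|=n-|I\cup J|=n-|I\bigtriangleup J|-|I\cap J|$ and collecting terms yields
\begin{equation*}
|\I_I\cap\I_J|=(\alpha+qn)+(p-q)\,|I\cap J|-q\,|I\bigtriangleup J|.
\end{equation*}
Setting this equal to $a_n\,|I\cap J|-b_n\,|I\bigtriangleup J|+c_n$ forces the linear system $q=b_n$, $p-q=a_n$, $\alpha+qn=c_n$, with unique solution $p=a_n+b_n$, $q=b_n$, $\alpha=c_n-nb_n$.

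It remains to check that these three numbers are nonnegative integers, since that is all that is required to realise $A,P_1,\dots,P_n,Q_1,\dots,Q_n$ as $2n+1$ pairwise disjoint finite subsets of $\N$ of the prescribed cardinalities. This is exactly where the three hypotheses enter: $q=b_n\ge 0$ is given, $p=a_n+b_n\ge 0$ is the assumption $a_n\ge -b_n$, and $\alpha=c_n-nb_n\ge 0$ is the assumption $c_n\ge nb_n$. The only genuinely creative step in the proof is guessing the decomposition into $A$, $P_i$ and $Q_i$; once the ansatz is in place the matching of coefficients is mechanical, and the construction can be carried out independently for each $n\in\N_0$.
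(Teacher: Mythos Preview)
Your proof is correct and is essentially identical to the paper's: the paper uses the same three-block decomposition (there called $A$, $C_i$, $B_i$, corresponding to your $A$, $P_i$, $Q_i$) with the same cardinalities $c_n-nb_n$, $a_n+b_n$, $b_n$, and arrives at the same cardinality computation. The only difference is expository---you motivate the ansatz and then solve for the block sizes, whereas the paper posits the sizes directly and verifies.
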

\begin{proof}
Fix $n\in\N_0$. Let $A$ and $B_i,C_i$, $i=1,\dots,n$, be disjoint subsets of $\N$ such that $|A|=c_n-nb_n$, and $|B_i|=b_n$, $|C_i|=a_n+b_n$ holds for $i=1,\dots,n$. Set
\begin{align*}
\I_I=A\,\bigcup\,\Biggl(\bigcup_{i\in\{1,\dots,n\}\setminus I}B_i\Biggr)\bigcup \,\Biggl(\bigcup_{i\in I}C_i\Biggr),\quad I\in\mathcal{P}_n,
\end{align*}
then we get for all $I,J\in\mathcal{P}_n$
\begin{align*}
|\I_I\cap \I_J|&=|A|+\Biggl|\bigcup_{i\in\{1,\dots,n\}\setminus I}\bigcup_{j\in\{1,\dots,n\}\setminus J}\left(B_i\cap B_j\right)\,\Biggr|+\Biggl|\,\bigcup_{i\in I}\bigcup_{j\in J}\left(C_i\cap C_j\right)\,\Biggr|\\
&=|A|+\bigl(n-|I\cup J|\bigr)|B_1|+|I\cap J||C_1|=f_n\bigl(|I\cap J|,|I\bigtriangleup J|\bigr),
\end{align*}
and the lemma is proved.
\end{proof}

For $x,y\in M$ and $n\in\N_0$, set $p_x=P(x\in B)$, $p_{xy}=P(x,y\in B)$, and let $V_{xy,n}=(V_{xy,n}^1,V_{xy,n}^2,V_{xy,n}^3,V_{xy,n}^4)$ be a multinomial distributed random vector with parameters $n$, $p_{xy}$, $p_x-p_{xy}$, $p_y-p_{xy}$, and $1-p_x-p_y+p_{xy}$. In the case $n=0$, the vector $V_{xy,n}$ equals the zero vector almost surely.

\begin{theorem}\label{thmcovallg}
Suppose that there are functions $(f_n,n\in\N_0)$ such that \eqref{eq:assumption} holds for the families $(\I_I,I\in \mathcal{P}_n)$, $n\in\N_0$, of the random field $(Z(x),x\in M)$ defined in \eqref{allgFeld}. Then for all $x,y\in M$
\begin{align}\label{E-Wert}
\E Z(x)&=\E U \,\E f_N\bigl(V_{xx,N}^1,0\bigr)
\end{align}
and 
\begin{align*}\label{gem Moment}
\E Z(x)Z(y)&=\V U\,\E f_N\bigl(V_{xy,N}^1,V_{xy,N}^2+V_{xy,N}^3\bigr)\\
&\hspace{1.5em}+ (\E U)^2\,\E f_N\bigl(V_{xy,N}^1+V_{xy,N}^2,0\bigr)f_N\bigl(V_{xy,N}^1+V_{xy,N}^3,0\bigr)- G_{xy}\V U\numberthis
\end{align*}
with
\begin{align}\label{R}
G_{xy}=\E\sum_{\substack{I,J\in\mathcal{P}_N\\g(I)\neq g(J)}}P(x\in C_I,y\in C_J)f_N\bigl(| I\cap J|,| I\triangle J|\bigr)
\end{align}
holds true.
\end{theorem}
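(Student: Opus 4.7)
The plan is to condition on $N$ together with the i.i.d.\ family $(B_n,n\in\N)$, which reduces everything to a routine computation over the i.i.d.\ array $(U_{i,j})$. The first observation I will use throughout is that, for fixed $n$, the family $\bigl(\1_{x\in C_I},I\in\mathcal{P}_n\bigr)$ is a partition of unity, and the unique non-vanishing index is $I(x)=\{i\le n:x\in B_i\}$. Combined with the independence of $N$, $(B_n)$, and $(U_{i,j})$, any sum of the form $\sum_{I,J\in\mathcal{P}_N}h(I,J)\,\1_{x\in C_I}\1_{y\in C_J}$ collapses after conditioning to $h\bigl(I(x),J(y)\bigr)$.

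For the mean, assumption \eqref{eq:assumption} with $J=I$ yields $|\I_I|=f_n(|I|,0)$, so that conditioning gives $\E[Z(x)\mid N,(B_n)]=\E U\cdot f_N\bigl(|I(x)|,0\bigr)$. Because the $\1_{x\in B_i}$ are i.i.d.\ Bernoulli$(p_x)$ and $p_{xx}=p_x$, the joint law of $\bigl(|I(x)|,0,0,N-|I(x)|\bigr)$ given $N$ coincides with that of $V_{xx,N}$, and averaging over $N$ proves \eqref{E-Wert}. For the second moment I expand
\begin{align*}
Z(x)Z(y)=\sum_{I,J\in\mathcal{P}_N}\sum_{j\in\I_I}\sum_{k\in\I_J}U_{g(I),j}U_{g(J),k}\,\1_{x\in C_I}\1_{y\in C_J}.
\end{align*}
Conditional on $N$ and $(B_n)$, the inner double sum of expectations splits by whether $g(I)=g(J)$: when the first indices agree, the $|\I_I\cap\I_J|$ diagonal pairs $j=k$ contribute $\E U^2$ and the remaining $|\I_I||\I_J|-|\I_I\cap\I_J|$ pairs contribute $(\E U)^2$; when $g(I)\ne g(J)$, independence gives $(\E U)^2$ for every pair. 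The conditional expectation therefore equals $|\I_I||\I_J|(\E U)^2+\1_{g(I)=g(J)}\,|\I_I\cap\I_J|\,\V U$.

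Using $\1_{g(I)=g(J)}=1-\1_{g(I)\ne g(J)}$, the $-\1_{g(I)\ne g(J)}$-contribution yields precisely the correction $-G_{xy}\,\V U$ from \eqref{R} after taking expectations. For the two remaining unrestricted double sums I collapse them to $(I(x),J(y))$ as above and apply \eqref{eq:assumption}: the $\V U$-term becomes $f_N\bigl(|I(x)\cap J(y)|,|I(x)\triangle J(y)|\bigr)$, while the $(\E U)^2$-term becomes $f_N(|I(x)|,0)\,f_N(|J(y)|,0)$. The last step is the geometric identification
\begin{align*}
\bigl(|I(x)\cap J(y)|,\,|I(x)\setminus J(y)|,\,|J(y)\setminus I(x)|,\,N-|I(x)\cup J(y)|\bigr)\overset{d}{=}V_{xy,N},
\end{align*}
which holds because each $i\le N$ falls independently into one of the four atoms $\{x,y\in B_i\}$, $\{x\in B_i,y\notin B_i\}$, $\{x\notin B_i,y\in B_i\}$, $\{x,y\notin B_i\}$, with the four probabilities defining the multinomial vector $V_{xy,N}$. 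Substituting yields \eqref{gem Moment}. The main obstacle is purely bookkeeping: one must carefully separate diagonal from off-diagonal $j,k$ in the $U$-moment computation so as not to double count, and keep track of which coordinate of $V_{xy,N}$ corresponds to each of the cardinalities $|I(x)|$, $|J(y)|$, $|I(x)\cap J(y)|$, $|I(x)\triangle J(y)|$; beyond that the argument is a direct consequence of the uniqueness of the cell containing a given point together with assumption \eqref{eq:assumption}.
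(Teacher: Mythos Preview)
Your argument is correct and follows essentially the same route as the paper. The only cosmetic difference is that the paper sums over all $I,J\in\mathcal{P}_n$ and computes $P(x\in C_I,y\in C_J)$ explicitly as a product of the four probabilities before recognising the multinomial weights, whereas you condition additionally on $(B_n)$ and use the partition-of-unity observation to collapse the double sum directly to the random pair $(I(x),J(y))$; the subsequent multinomial identification and the splitting of the $U$-moments into diagonal and off-diagonal pairs are identical in both proofs.
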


\begin{proof}
By definition of the cells, independence and identity of the distributions of the $B_i$, $i\in\N$, we have for every $n\in\N_0$, $I\in\mathcal{P}_n$, and every $x\in M$
\begin{align*}
P(x\in C_I)=\prod_{i\in I}P(x\in B_i)\prod_{j\in\{1,\dots,n\}\setminus I}P(x\notin B_j)=p_x^{| I |}(1-p_x)^{n-| I|}.
\end{align*}
Using this we get
\begin{align*}
\E Z(x)&=\sum_{n\in\N_0}P(N=n)\,\E\Biggl(\sum_{I\in\mathcal{P}_n}\Biggl(\sum_{j\in \I_I} U_{g(I),j}\Biggr)\1_{x\in C_I}\Biggr)\\
&=\sum_{n\in\N_0}P(N=n)\sum_{I\in\mathcal{P}_n}P(x\in C_I)\sum_{i\in \I_I}\E U_{g(I),i}\\
&=\E U \sum_{n\in\N_0} P(N=n)\sum_{I\in\mathcal{P}_n}p_x^{| I|}(1-p_x)^{n-| I|}|\I_I|.
\end{align*}
By assumption $|\I_I|=f_n(| I|,0)$, and as there are $\binom{n}{k}$ subsets of $\{1,\dots,n\}$ with $k$ elements, we find
\begin{align*}
\E Z(x)&=\E U\sum_{n\in\N_0}P(N=n)\sum_{k=0}^n \binom{n}{k}p_x^k (1-p_x)^{n-k} f_n(k,0)\\
&=\E U  \sum_{n\in \N_0}P(N=n)\,\E\bigl( f_n\bigl(V_{xx,n}^1,0\bigr) \big\vert N=n\bigr) = \E U \,\E f_N\bigl(V_{xx,N}^1,0\bigr),
\end{align*}
proving formula \eqref{E-Wert}. Regarding the mixed moment, we obtain
\begin{align*}
\sum_{i\in\I_I,j\in\I_J}\E U_{g(I),i}U_{g(J),j}=(\E U)^2\bigl(| \I_I| | \I_J|-| \I_I\cap \I_J|\bigr)+\sum_{i\in\I_I\cap\I_J }\E U_{g(I),i}U_{g(J),i}
\end{align*}
for all $I,J\in\mathcal{P}^*(\N)$. Since the second indices of the random variables in the last sum are equal, the last sum equals $\E U^2 | \I_I\cap\I_J|$ in case $g(I)=g(J)$ and $(\E U)^2 | \I_I\cap\I_J|$ otherwise. Hence
\begin{align*}
&\sum_{i\in\I_I}\sum_{j\in\I_J}\E U_{g(I),i}U_{g(J),j}=(\E U )^2\,| \I_I||\I_J|+\V U\,|\I_I\cap\I_J|\,\1_{g(I)=g(J)}.
\end{align*}
This yields
\begin{align*}
\E Z(x)Z(y)&=\sum_{n\in\N_0}P(N=n)\sum_{I,J\in\mathcal{P}_n}P(x\in C_I,y\in C_J)\sum_{i\in\I_I}\sum_{j\in\I_J}\E U_{g(I),i }U_{g(J),j}\\
  &=\V U\sum_{n\in\N_0}P(N=n)\sum_{I,J\in\mathcal{P}_n}P(x\in C_I,y\in C_J)\,|\I_I\cap\I_J|\\
 &\hspace{4em}+(\E U)^2\sum_{n\in\N_0}P(N=n)\sum_{I,J\in\mathcal{P}_n}P(x\in C_I,y\in C_J)\,|\I_I||\I_J|\\
 &\hspace{4em}-\V U\, G_{xy}. 
\end{align*}
Furthermore, for $n\in\N_0$,
\begin{align*}
 P&(x\in C_I,y\in C_J) \\
&=\, P(x,y\in B)^{| I\cap J|}P(x\in B,y\notin B)^{| I\setminus J|}P(x\notin B,y\in B)^{| J\setminus I|}P(x,y\notin B)^{n-| I\cup J|}\\
&=\, p_{xy}^{| I\cap J|}(p_x-p_{xy})^{| I\setminus J|}(p_y-p_{xy})^{| J\setminus I|}(1-p_x-p_y+p_{xy})^{n-| I\cup J|}.
\end{align*}
With the assumptions on $(\I_I,I\in\mathcal{P}_n)$, $n\in\N_0$, and the multinomial distribution we get
\begin{align*}\label{eq:grosseformel}
\sum_{I,J\in\mathcal{P}_n}P(x&\in C_I,y\in C_J)\,|\I_I\cap\I_J|\\
&=\sum_{\substack{k_1,k_2,k_3,k_4\in\N_0\\k_1+k_2+k_3+k_4=n}}\binom{n}{k_1,k_2,k_3,k_4}p_{xy}^{k_1}(p_x-p_{xy})^{k_2}(p_y-p_{xy})^{k_3}\\
&\hspace{8em}\times(1-p_x-p_y+p_{xy})^{k_4} f_n(k_1,k_2+k_3)\\
&=\E \bigl(f_n\bigl(V_{xy,n}^1,V_{xy,n}^2+V_{xy,n}^3\bigr)\big\vert N=n\bigr).\numberthis
\end{align*}
Similarly, the sum $\sum_{I,J\in\mathcal{P}_n}P(x\in C_I,y\in C_J)\,| \I_I||\I_J|$ reduces to the expression \eqref{eq:grosseformel} where $f_n\bigl(V_{xy,n}^1,V_{xy,n}^2+V_{xy,n}^3\bigr)$ is replaced by $f_n\bigl(V_{xy,n}^1+V_{xy,n}^2,0\bigr)f_n\bigl(V_{xy,n}^1+V_{xy,n}^3,0\bigr)$, yielding formula \eqref{gem Moment}.
\end{proof}

We write $\rho_i$, $i=M, RT, MRT$, and $DL$, for the correlation function of $Z_i$. Fur\-ther\-more, we let $\psi_N$ be the probability generating function of the $\N_0$-valued random variable $N$. 

\begin{corollary}\label{KorollarmitSPezialfallen}
Let $U$, $N$, and $B$ be such that $\V U>0$, $\E N >0$, and $p_x>0$ for all $x\in M$. Then for all $x,y\in M$
\begin{align}\label{corrmosaik}
\rho_M(x,y)=\psi_N(1+2p_{xy}-p_x-p_y),
\end{align}
\begin{align}\label{corrrc}
\rho_{RT}(x,y)=\frac{ap_{xy}+bp_xp_y}{\sqrt{(a+bp_x)(a+bp_y)p_xp_y}}
\end{align}
with $a=\E U^2\,\E N$ and $b=(\E U)^2(\V N-\E N)$,
\begin{align}\label{corrmischung}
\rho_{MRT}(x,y)&=\frac{p_{xy}\bigl(c\psi_N'(1+2p_{xy}-p_x-p_y)-d\bigr)}{\sqrt{(a+b p_x)(a+b p_y)p_xp_y}}+\rho_{RT}(x,y)
\end{align}
with $c=\V U$ and $d=\V U\,\E N$, and
\begin{align}\label{eq:dead leaves corr}
\rho_{DL}(x,y)=\frac{p_{xy}+(p_x+p_y-2p_{xy})\psi_N(1-p_x-p_y+p_{xy})}{p_x+p_y-p_{xy}}
\end{align}
hold true.
\end{corollary}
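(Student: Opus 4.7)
The plan is to specialise Theorem \ref{thmcovallg} to each of the four submodels in turn; in every case the first two terms of \eqref{gem Moment} reduce by direct computation, and the only genuine work lies in evaluating $G_{xy}$. For $Z_M$ we have $f_n\equiv 1$ and $g$ injective, so those two terms collapse to $\V U+(\E U)^2$ and the condition $g(I)\neq g(J)$ becomes $I\neq J$. Hence $G_{xy}=1-\E\sum_{I\in\mathcal{P}_N}P(x,y\in C_I)$, and by the binomial theorem the diagonal sum at level $n$ equals $(1+2p_{xy}-p_x-p_y)^n$; taking expectation over $N$ yields $G_{xy}=1-\psi_N(1+2p_{xy}-p_x-p_y)$. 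Since $G_{xx}=0$, this gives $\V Z_M(x)=\V U$ and hence \eqref{corrmosaik}.

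For $Z_{RT}$ we have $f_n(i,j)=i$ and $g\equiv 1$, so $G_{xy}=0$ automatically. The first term in \eqref{gem Moment} reduces to $\V U\,p_{xy}\E N$, while the second factorises as $(\E U)^2\,\E XY$ with $X$ and $Y$ the counts of sets among $B_1,\dots,B_N$ containing $x$ respectively $y$; conditioning on $N$ and splitting $\E XY$ into diagonal and off-diagonal parts gives $\E XY=p_{xy}\E N+p_xp_y\E N(N-1)$. Subtracting the product of means and rewriting via $\E U^2=\V U+(\E U)^2$ and $\E N(N-1)=\V N+(\E N)^2-\E N$ in terms of $a$ and $b$ yields \eqref{corrrc}. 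The field $Z_{MRT}$ shares $f_n(i,j)=i$ with $Z_{RT}$ but has $g$ injective, so its covariance differs from that of $Z_{RT}$ only by $-\V U\cdot G_{xy}$. By \eqref{eq:grosseformel} the full sum over $(I,J)$ equals $p_{xy}\E N$, while the diagonal $\sum_I|I|P(x,y\in C_I)$ at level $n$ evaluates to $np_{xy}(1+2p_{xy}-p_x-p_y)^{n-1}$ by differentiating the binomial expansion; taking expectation yields $p_{xy}\psi_N'(1+2p_{xy}-p_x-p_y)$, so $G_{xy}=p_{xy}(\E N-\psi_N'(1+2p_{xy}-p_x-p_y))$. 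Inserting $-\V U\cdot G_{xy}$ into the $Z_{RT}$ covariance and dividing by $\sqrt{(a+bp_x)(a+bp_y)p_xp_y}$ (the denominator is unchanged because $G_{xx}=0$) produces \eqref{corrmischung}.

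The main obstacle is $Z_{DL}$, where $f_n\equiv 1$ but the map $g(I)=\1_{I\neq\emptyset}\max I$ is neither injective nor constant. For fixed $n$ I would split the sum over pairs with $g(I)=g(J)$ according to $k:=\max I=\max J$, treating $I=J=\emptyset$ separately. Writing $I=\{k\}\cup I'$, $J=\{k\}\cup J'$ with $I',J'\subseteq\{1,\dots,k-1\}$, the cell probability factorises as $p_{xy}$ times the analogous cell probability for $I',J'$ inside $\{1,\dots,k-1\}$ times $(1-p_x-p_y+p_{xy})^{n-k}$, and the sum over $I',J'$ collapses to $1$ since the corresponding cells partition the space. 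The outer sum over $k$ is a geometric series in $t:=1-p_x-p_y+p_{xy}$ that simplifies to $(p_{xy}+t^n(p_x+p_y-2p_{xy}))/(p_x+p_y-p_{xy})$; taking expectation over $N$ and subtracting from $1$ gives $G_{xy}$. Since $G_{xx}=0$ the denominator of the correlation is $\V U$, so $\rho_{DL}(x,y)=1-G_{xy}$, which is exactly \eqref{eq:dead leaves corr}.
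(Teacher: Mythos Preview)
Your proposal is correct and follows the paper's strategy closely: specialise Theorem~\ref{thmcovallg}, reduce the first two terms of \eqref{gem Moment} using the particular $f_n$, and compute $G_{xy}$ case by case. For $Z_M$, $Z_{RT}$, and $Z_{MRT}$ your computations coincide with the paper's almost line by line; the only cosmetic difference is that for $Z_{RT}$ you evaluate $\E(V^1+V^2)(V^1+V^3)$ via the indicator decomposition $X=\sum_i\1_{x\in B_i}$, $Y=\sum_i\1_{y\in B_i}$ and a diagonal/off-diagonal split, whereas the paper quotes the covariance of multinomial components --- the two are of course the same calculation.

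The one place where you genuinely deviate is the dead leaves model. The paper computes $A_n=\sum_{g(I)=g(J)}P(x\in C_I,y\in C_J)$ by observing that $\mathcal P_{n+1}=\mathcal P_n\cup\{I\cup\{n+1\}:I\in\mathcal P_n\}$ yields the recurrence $A_{n+1}=(1-p_x-p_y+p_{xy})A_n+p_{xy}$, which it then solves. You instead stratify directly by $k=\max I=\max J$ (together with the $I=J=\emptyset$ term), factor out $p_{xy}\,t^{n-k}$, and collapse the inner sum over $I',J'\subseteq\{1,\dots,k-1\}$ to $1$ because the cells partition $M$. Both routes land on the same closed form $A_n=\bigl(p_{xy}+t^n(p_x+p_y-2p_{xy})\bigr)/(p_x+p_y-p_{xy})$; your direct summation is arguably more transparent since it avoids setting up and solving a recursion, while the paper's recurrence makes the Markovian ``last leaf'' structure of the model explicit.
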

\begin{proof}
For the simple mosaic random field \eqref{simplemosaic} we have $\I_I=\{1\}$ for all $I\in\mathcal{P}^*(\N)$, hence the functions $f_n$ in \eqref{eq:assumption} can by taken to be identically $1$. Consequently, we get $\E Z_M(x)=\E U$ for all $x\in M$ from \eqref{E-Wert}. Since $g$ is injective for this field, we have for all $x,y\in M$ for the variable $G_{xy}$ from \eqref{R} with the same reasoning as in the proof of Theorem \ref{thmcovallg}
\begin{align*}
G_{xy}&=\sum_{n\in\N_0}P(N=n)\sum_{I\neq J}P(x\in C_I,y\in C_J)\\
&=1- \sum_{n\in\N_0}P(N=n)\sum_{I\in\mathcal{P}_n}P(x,y\in C_I)\\
&=1-\psi_N(1+2p_{xy}-p_x-p_y).
\end{align*}
Thus formula \eqref{gem Moment} yields
\begin{align*}
\E Z_M(x)Z_M(y)&=\V U\,\psi_N(1+2p_{xy}-p_x-p_y)+(\E U)^2.
\end{align*}
From this we can compute the variance of $Z_M(x)$, the covariance of $Z_M(x)$ and $Z_M(y)$, and then \eqref{corrmosaik} follows. In case of the random token field \eqref{RCfeld}, we have $\I_I=I$ for all $I\in\mathcal{P}^*(\N)$, and we can choose $f_n$ to be the projection on the first coordinate. Therefore
\begin{align*}
\E Z_{RT}(x)=\E U\,\E V_{xx,N}^1=\E U\, \E N p_x
\end{align*}
by Theorem \ref{thmcovallg}. The function $g$ is identically $1$ in case of the random token field, hence $G_{xy}=0$ and 
\begin{align*}
\E Z_{RT}(x)Z_{RT}(y)&=\V U\,\E V_{xy,N}^1+(\E U)^2\,\E \bigl(V_{xy,N}^1 
+V_{xy,N}^2\bigr)\bigl(V_{xy,N}^1+V_{xy,N}^3\bigr)
\end{align*}
by \eqref{gem Moment}. The covariance of the components of a multinomial distributed random vector is well-known and a straight forward computation yields
\begin{align*}\label{gemmomrc}
\E Z_{RT}(x)Z_{RT}(y)&=\E U^2\E N p_{xy}+(\E U)^2(\E N^2-\E N) p_x p_y\\
&=a p_{xy}+b p_xp_y+ \E Z_{RT}(x)\E Z_{RT}(y),\numberthis
\end{align*}
which implies \eqref{corrrc}. Now consider the mixture random field \eqref{Mischungfeld}. Again, we have $\I_I=I$ for all $I\in\mathcal{P}^*(\N)$ and we can choose the same $f_n$ as above. Consequently, $\E Z_{MRT}(x)=\E Z_{RT}(x)$. But in contrast to the random token field, $g$ is injective for the mixture random field. Reasoning as in the proof of Theorem \ref{thmcovallg} we obtain
\begin{align*}
G_{xy}&=\sum_{n\in\N_0}P(N=n)\sum_{I,J\in\mathcal{P}_n}P(x\in C_I,y\in C_J)| I\cap J|\\
&\hspace{4em}-\sum_{n\in\N_0}P(N=n)\sum_{I\in\mathcal{P}_n}P(x,y\in C_I)| I|\\
&=\E N p_{xy}-\sum_{n\in\N_0}P(N=n)\left(1+2p_{xy}-p_x-p_y\right)^{n-1} n p_{xy}\\
&=\E N p_{xy}-p_{xy}\psi_N'(1+2p_{xy}-p_x-p_y)
\end{align*}
and then with \eqref{gem Moment} and \eqref{gemmomrc}
\begin{eqnarray*}
\lefteqn{ \E Z_{MRT}(x)Z_{MRT}(y)}\\
 & & = \E Z_{RT}(x) Z_{RT}(y)-G_{xy} \V U \\
  & & = c p_{xy}\psi_N'(1+2p_{xy}-p_x-p_y)+(a-d) p_{xy}+ bp_xp_y+\E Z_{MRT}(x)\E Z_{MRT}(y).
\end{eqnarray*}   
This shows \eqref{corrmischung}. For the dead leaves model \eqref{deadleaves} we have $f_n\equiv 1$ for all $n\in \N_0$ and hence $\E Z_{DL}(x)=\E U$. In order to compute $G_{xy}$ we let  \begin{align*}
A_n=\sum_{\substack{I,J\in \mathcal{P}_n \\ g(I)=g(J)}}P(x\in C_I,y\in C_J),\quad n\in\N_0.
\end{align*}
Writing $\mathcal{P}_{n+1}=\mathcal{P}_n\cup\{I\cup\{n+1\}:I\in\mathcal{P}_n\}$ and using $g(I)=\1_{I\neq\emptyset}\max I$ we get the recurrence relation
\begin{align*}
A_{n+1}=P(x,y\notin B)A_n+P(x,y\in B),\quad n\in\N_0,
\end{align*}
which leads to 
\begin{align*}
A_n=\frac{P(x,y\in B)}{1-P(x,y\notin B)}+\frac{P(x\in B,y\notin B)+P(x\notin B,y\in B)}{1-P(x,y\notin B)}P(x,y\notin B)^n,\quad n\in\N_0,
\end{align*}
and then with \eqref{R}
\begin{align*}
G_{xy}&=1-\sum_{n\in\N_0}P(N=n)A_n\\
&=\frac{P(x\in B,y\notin B)+P(x\notin B,y\in B)}{1-P(x,y\notin B)}\bigl(1-\psi_N(P(x,y\notin B))\bigr).
\end{align*}
Collecting terms we get with \eqref{gem Moment}
\begin{align*}
\E Z_{DL}(x)Z_{DL}(y)=\E U^2-\V U \bigl(1-\psi_N(1-p_x-p_y+p_{xy})\bigr)\frac{p_x+p_y-2p_{xy}}{p_x+p_y-p_{xy}}
\end{align*}
and then \eqref{eq:dead leaves corr} follows.
\end{proof}
We end this section by considering the special case where $N$ is a Poisson random variable. In this case we write $\hat{\rho}_i$, $i=M,RT$, and $MRT$ for the correlation function of $Z_i$. Plugging in the moments and the probability generating function of the Poisson distribution into formulae \eqref{corrmosaik}, \eqref{corrrc}, and \eqref{corrmischung}, yields the relation
\begin{align*}
\hat{\rho}_{MRT}=\lambda\, \hat{\rho}_{RT}\,\hat{\rho}_M+(1-\lambda)\,\hat{\rho}_{RT}\quad\text{with}\quad \lambda =\frac{\V U}{\E U^2}\in(0,1].
\end{align*} 

\section{Explicit Formulae for bounded subsets M of $\R^d$}\label{sec:Rd}

The formulae in Corollary \ref{KorollarmitSPezialfallen} depend on the law of the random closed set $B$ through the probabilities $p_x=P(x\in B)$ and $p_{xy}=P(x,y\in B)$. Observe that for every $x\in M$ we have $p_x=p_{xx}$ so that it suffices to compute $p_{xy}$ for all $x,y\in M$. In what follows we give examples for $B$ and compute these probabilities to obtain explicit correlation functions. In order to get reasonable formulae we require that the random sets are in some sense uniformly placed in $\R^d$. In the pertinent literature this is typically done by placing the random sets at the points of a Poisson point process. The drawback of this method is that the number of random sets $N$ must follow a Poisson distribution. As the formulae in Theorem \ref{thmcovallg} and Corollary \ref{KorollarmitSPezialfallen} indicate, different distributions for $N$ may lead to different types of correlation functions, depending on the concrete choices determining a submodel. In the sequel we restrict ourselves to bounded subsets $M$ of $\R^d$,
and it is convenient - and without any serious loss of generality - to
assume furthermore that $M$ is closed or open. In this way it is possible to place the random sets uniformly on $M$ and have an arbitrary distribution on $\N_0$ for the number of random sets.

Let $\langle x,y\rangle$, $x,y\in\R^d$, denote the euclidean inner product in $\R^d$ and $\|x\|=\sqrt{\langle x,x\rangle}$, $x\in\R^d$, be the corresponding norm. Furthermore, $\mathcal{B}(\R^d)$ denotes the Borel $\sigma$-algebra on $\R^d$ and $\lambda^d$ the Lebesgue-measure. 

As a first example we take a half-space delimited by random hyperplanes for the random closed set $B$. For this, let $\mathbb{S}^{d-1}=\{z\in\R^{d}:\langle z,z\rangle=1\}$ be the $(d-1)$-dimensional unit sphere embedded in $\R^{d}$. The sphere $\mathbb{S}^0$ is just the set $\{-1,1\}$. For $d\geq 2$, it is convenient to use spherical coordinates for $\mathbb{S}^{d-1}$, which are given by the map $\phi_{d-1}:[0,2\pi)\times[0,\pi]^{d-2}\rightarrow\mathbb{S}^{d-1}$ recursively defined by
\begin{align*}\label{eq:sphaerische koordinaten}
\phi_1(\varphi)&=\bigl(\cos\varphi,\sin\varphi\bigr),\\
\phi_k(\varphi,\theta_1,\dots,\theta_{k-1})&=\bigl(\phi_{k-1}(\varphi,\theta_1,\dots,\theta_{k-2})\sin\theta_{k-1},\cos\theta_{k-1}\bigr), \quad k\geq 2.\numberthis
\end{align*}
In order for $\phi_{d-1}$ to be one-to-one, the domain of $\phi_{d-1}$ has to be restricted but this can be neglected for our purposes. For $d\geq 2$, let $\mathcal{B}(\mathbb{S}^{d-1})$ be the Borel $\sigma$-algebra on $\mathbb{S}^{d-1}$ and let $\sigma_{d-1}$ denote the surface measure of $\mathbb{S}^{d-1}$, which admits the representation
\begin{align}\label{eq:surface measure sphere}
\sigma_{d-1}(A)=\int_0^{2\pi}\int_0^\pi\dots\int_0^\pi \1_A\bigl(\phi_{d-1}(\varphi,\theta_1,\dots,\theta_{d-2})\bigr)\prod_{k=1}^{d-2}\sin^k\theta_k\, d\theta_{d-2}\dots d\theta_1 d\varphi
\end{align}
for every $A\in\mathcal{B}(\mathbb{S}^{d-1})$. The total mass of $\sigma_{d-1}$ is $2\pi^{d/2}/\Gamma(d/2)$ and we let $\hat{\sigma}_{d-1}=2^{-1}\pi^{-d/2}\Gamma(d/2)\sigma_{d-1}$ denote the uniform probability measure on $\mathbb{S}^{d-1}$.

A hyperplane $P(x,r)$ in $\R^d$, given in normal form, is the set of all $z\in\R^d$ with $\langle z,x\rangle=r$, where $x\in\mathbb{S}^{d-1}$, $r\in\R$, and $rx$ is the vector from the origin perpendicular to $P(x,r)$. The hyperplane $P(x,r)$ divides $\R^d$ into two half-spaces, consider the half-space that is given by $H(x,r)=\{ z\in\R^d : \langle z,x\rangle \geq r\}$. Let $(X_n,n\in\N)$ be an independent sequence of uniformly distributed random variables on $\mathbb{S}^{d-1}$ (e.g., \cite{Mull56, Mar72}) and let $(R_n,n\in\N)$ be an independent sequence of uniformly distributed random variables on the interval $[-C_M,C_M]$ for a constant $C_M>0$ large enough such that $M$ is contained in a closed ball with radius $C_M$ centered at the origin. Furthermore, let $(X_n,n\in\N)$ and $(R_n,n\in\N)$ be independent. Then $(H_n,n\in\N)$ defined by $H_n=H(X_n,R_n)\cap M$ is a sequence of random closed sets in $M$. 

For the second example we fix $a>0$ and let $(Y_n,n\in\N)$ be an independent sequence of random variables, uniformly distributed on the ball $B_{C_M+a/2}(0)$ of radius $C_M+a/2$ centered at the origin. Furthermore, let $(D_n,n\in\N)$ be an i.i.d.\ sequence of $[0,a]$-valued random variables, independent of $(Y_n,n\in\N)$. Then,  $B_n=B_{D_n/2}(Y_n)\cap M=\{z\in M : \|z-Y_n\|\leq D_n/2\}$ defines an i.i.d.\ sequence of random closed sets in $M$. Since $Y$ is uniformly distributed and independent of the diameter $D$, we have 
\begin{align}\label{eq:pxyinb ist covariogram}
P\bigl(x,y\in B_{D/2}(Y)\bigr)&=P\bigl(Y\in B_{D/2}(x)\cap B_{D/2}(y)\bigr)=\frac{\E \lambda^d\bigl(B_{D/2}(x)\cap B_{D/2}(y)\bigr)}{\lambda^d\bigl(B_{C_M+a/2}(0)\bigr)}
\end{align}
for all $x,y\in M$. If for example $D$ is taken to be deterministic, this reduces to a normalized geometric covariogram of a ball (e.g., \cite{Lantue02}). The intersection of two balls in $\R^d$ can be represented as the union of two equally sized hyperspherical caps. Hence, if $D$ is equal to some $0<t\leq a$ it follows from \eqref{eq:pxyinb ist covariogram} and \cite{Li11} that
\begin{align}\label{eq:pxyinb incomplete beta}
P\bigl(x,y\in B_{t/2}(Y)\bigr)&=\frac{\Gamma(d/2+1)t^d}{\sqrt{\pi}(2C_M+a)^d\Gamma((d+1)/2)}B_{1-d_{xy}^2/t^2}\biggl( \frac{d+1}{2},\frac{1}{2}\biggr)\1_{d_{xy}\leq t},
\end{align}
where we define $d_{xy}=\|x-y\|$ and
\begin{align*}
B_x(a,b)=\int_0^x t^{a-1}(1-t)^{b-1}\, dt,\quad x\in[0,1], a,b>0,
\end{align*}
is the incomplete Beta function. For example in dimension $2$, we can use formulae $8.17.20$ in \cite{DLMF} and $8.391, 9.121.26$ in \cite{Ry65} to obtain
\begin{align*}
P\bigl(x,y\in B_{t/2}(Y)\bigr)&=\frac{2}{\pi(2C_M+a)^2}\biggl(t^2\arccos\frac{d_{xy}}{t}-d_{xy}\sqrt{t^2-d_{xy}^2}\biggr)\1_{d_{xy}\leq t}.
\end{align*}
If the diameter $D$ is chosen to be a continuously distributed random variable, equation \eqref{eq:pxyinb incomplete beta} has to be integrated with respect to the distribution of $D$. Sironvalle showed in \cite{Siro80}, that for $d=2$ the choice 
\begin{align}\label{eq: Alfaros distribution function}
F(x)=\frac{1}{a}\Bigl(a-\sqrt{a^2-x^2}\Bigr)\1_{0\leq x\leq a}+\1_{x>a},\quad x\in\R,
\end{align}
for the distribution function of the diameter $D$ results in $P(x,y\in B)$ being proportional to the spherical correlation function
\begin{align}\label{eq:spherical correlation}
\rho(x,y)=\biggl(1-\frac{3d_{xy}}{2a}+\frac{d_{xy}^3}{2a^3}\biggr)\1_{d_{xy}\leq a}.
\end{align}
In Proposition \ref{pro: WKen auf R^2} below we consider the case of uniformly distributed diameter.

An example for random sets which lead to a stationary but anisotropic correlation function is given by hyperrectangles of the form $E_n=E(Z_n)\cap M=\bigl\{z\in M: |z_1-Z_n^1|\leq a_1,\dots,|z_d-Z_n^d|\leq a_d\bigr\}$ for $a_1,\dots,a_d>0$ and an i.i.d.\ sequence $(Z_n,n\in\N)$ such that $Z=\bigl(Z^1,\dots,Z^d\bigr)$ is uniformly distributed on $\prod_{k=1}^{d}[-(R_k+a_k),R_k+a_k]$ where $R=\prod_{k=1}^d[-R_k,R_k]$ is a hyperrectangle large enough such that $M\subseteq R$.

\begin{proposition}\label{pro: WKen auf R^2}
Suppose that $M\subset \R^d$ is as above,  fix $x,y\in M$, and let $d_{xy}=\|x-y\|$. Then for  $H=H(X,R)\cap M$
\begin{align}\label{eq:px pxy rd}
P(x,y\in H)=\frac{1}{2}-\frac{1}{\Omega_1}d_{xy}
\end{align}
holds with $\Omega_1=4\sqrt{\pi}C_M\Gamma((d+1)/2)/\Gamma(d/2)$. For $B=B_{D/2}(Y)\cap M$ with $D$ being uniformly distributed on $[0,a]$ the following formula holds true
\begin{align}\label{eq: R^d uniform radius}
P(x,y\in B)=\frac{1}{\Omega_2}\biggl(a^d B_{1-d_{xy}^2/a^2}\biggl(\frac{d+1}{2},\frac{1}{2}\biggr)-\frac{d_{xy}^{d+1}}{a}B_{1-d_{xy}^2/a^2}\biggl(\frac{d+1}{2},-\frac{d}{2}\biggr)\biggr)\1_{d_{xy}\leq a},
\end{align}
where $d_{xy}^{d+1}B_{1-d_{xy}^2/a^2}\left((d+1)/2,-d/2\right)$ is defined as zero for $d_{xy}=0$, and the constant is $\Omega_2=(d+1)\sqrt{\pi}(2C_M+a)^d\Gamma((d+1)/2)/\Gamma(d/2+1)$. For $E=E(Z)\cap M$
\begin{align}\label{eq:R^d quadrat token}
P(x,y\in E)=\prod_{k=1}^d\frac{1}{2(R_k+a_k)}\left(2a_k-|x_k-y_k|\right)_+
\end{align}
holds true.
\end{proposition}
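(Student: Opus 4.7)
I would handle the three parts of the proposition independently by conditioning on the randomness that positions the set in each case and exploiting the uniform placement.

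\emph{Half-space.} The event $\{x, y \in H(X, R)\}$ equals $\{R \leq \min(\langle x, X\rangle, \langle y, X\rangle)\}$, and since $M$ lies in the closed ball of radius $C_M$ this minimum is in $[-C_M, C_M]$ almost surely. Conditioning on $X$ and integrating against the uniform distribution of $R$ gives
\[
P(x, y \in H) = \tfrac{1}{2} + \tfrac{1}{2 C_M}\, \E\min(\langle x, X\rangle, \langle y, X\rangle).
\]
Writing $\min(a, b) = (a+b)/2 - |a-b|/2$, the first half vanishes because $\E X = 0$, and rotation invariance of the uniform law on $\mathbb{S}^{d-1}$ reduces the remaining expectation to $\|x-y\|\,\E|X_1| = d_{xy}\,\E|X_1|$. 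The constant $\E|X_1|$ follows from the marginal density $\propto (1-u^2)^{(d-3)/2}$ on $[-1,1]$; the substitution $v = 1-u^2$ combined with $(d-1)\Gamma((d-1)/2) = 2\Gamma((d+1)/2)$ gives $\E|X_1| = \Gamma(d/2)/(\sqrt\pi\,\Gamma((d+1)/2))$, and substituting back produces \eqref{eq:px pxy rd} with exactly the constant $\Omega_1$.

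\emph{Ball.} For this case the conditional probability $P(x, y \in B \mid D = t)$ is already given by \eqref{eq:pxyinb incomplete beta}, so what remains is to integrate against the uniform density $1/a$ of $D$ on $[0, a]$, i.e.\ to evaluate
\[
\int_{d_{xy}}^{a} t^{d}\, B_{1 - d_{xy}^2/t^2}\bigl((d+1)/2,\, 1/2\bigr)\, dt.
\]
This is the main obstacle, and I would use integration by parts with $dv = t^d\, dt$ together with $\partial_x B_x(\alpha, \beta) = x^{\alpha-1}(1-x)^{\beta-1}$. The boundary term at $t = d_{xy}$ vanishes since the incomplete Beta evaluates at $0$, and the boundary term at $t = a$ contributes the first summand of the claim. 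For the surviving integral $\int_{d_{xy}}^{a}(t^2 - d_{xy}^2)^{(d-1)/2}\, dt$, the successive substitutions $u = d_{xy}^2/t^2$ and $s = 1-u$ identify it with $\tfrac{1}{2}\,d_{xy}^{d}\, B_{1 - d_{xy}^2/a^2}((d+1)/2,\, -d/2)$. Collecting constants and comparing with the definition of $\Omega_2$ yields \eqref{eq: R^d uniform radius}.

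\emph{Hyperrectangle.} This case is direct: by the product structure
\[
\{x, y \in E\} = \bigcap_{k=1}^{d}\bigl\{|x_k - Z^k| \leq a_k,\ |y_k - Z^k| \leq a_k\bigr\}
\]
and independence of the coordinates of $Z$, the probability factorises into $d$ terms. Each factor is the uniform probability that $Z^k$ lies in the (possibly empty) interval $[\max(x_k, y_k) - a_k,\ \min(x_k, y_k) + a_k]$, whose length equals $(2a_k - |x_k - y_k|)_+$. Dividing by $2(R_k + a_k)$ and multiplying over $k$ yields \eqref{eq:R^d quadrat token}.
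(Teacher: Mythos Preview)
Your argument is correct in all three parts, but the first two differ in execution from the paper. For the half-space, the paper computes $P(x\in H,y\notin H)$ via spherical coordinates after rotating $(x-y)/d_{xy}$ to the north pole, whereas you go straight to $P(x,y\in H)$ through the identity $\min(a,b)=\tfrac{a+b}{2}-\tfrac{|a-b|}{2}$ and the marginal density of one coordinate of $X$; this is shorter and avoids the explicit spherical integration, at the small cost that the density formula $(1-u^2)^{(d-3)/2}$ only makes literal sense for $d\ge 2$, so $d=1$ should be checked by hand (it gives $\E|X_1|=1$, matching your formula). For the ball, the paper expands the incomplete Beta as an integral and applies Fubini to swap $t$ and $s$, while you integrate by parts in $t$ and then substitute $u=d_{xy}^2/t^2$, $s=1-u$; both routes land on the same two incomplete Beta terms with the same constants, and neither is clearly simpler than the other. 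The hyperrectangle part is identical to the paper's.
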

\begin{proof}
The point $(x,r)\in\mathbb{S}^{d-1}\times\R$ defines the same hyperplane as the point $(-x,-r)$, but due to the opposite direction of the normal vector $x$, the relation $H(x,r)\setminus P(x,r)=H(-x,-r)^{c}$ holds true for the half-spaces. By construction, $X$ and $R$ have the same distribution as $-X$ and $-R$, respectively. Thus 
\begin{align*}
P\bigl(x\in H(X,R)\bigr)&=P\bigl(x\in H(-X,-R)\bigr)\\
&=P\bigl(x\in H(-X,-R)\setminus P(-X,-R)\bigr)=P\bigl(x\notin H(X,R)\bigr),
\end{align*}
which implies $P(x\in H)=P(x\notin H)=1/2$ for all $x\in M$. Now let $x\neq y$ and $d\geq 2$, then
\begin{align*}
P(x\in H,y\notin H)&=P\bigl(\langle X,y\rangle < R\leq \langle X,x\rangle\bigr)\\
&=\int_{\mathbb{S}^{d-1}}\frac{1}{2 C_M}\bigl(\langle z,x\rangle -\langle z,y\rangle\bigr)\1_{\langle z,x\rangle > \langle z,y\rangle}\,d\sigma_{d-1}(z)\\
&=\frac{d_{xy}}{2C_M}\E\biggl\langle X,\frac{x-y}{d_{xy}}\biggr\rangle\1_{\bigl\langle X,\frac{x-y}{d_{xy}}\bigr\rangle > 0}.
\end{align*}
Let $\mathcal{R}$ be a rotation which maps $(x-y)/d_{xy}\in\mathbb{S}^{d-1}$ to the point $\bigl(0,\dots,0,1\bigr)$, then $\langle X,(x-y)/d_{xy}\rangle =\bigl\langle \mathcal{R} X , \bigl(0,\dots,0,1\bigr)\bigr\rangle$. Since $\mathcal{R}X$ and $X$ have the same distribution, we have using \eqref{eq:sphaerische koordinaten} and \eqref{eq:surface measure sphere}
\begin{align*}
\E \biggl \langle X,\frac{x-y}{d_{xy}}\biggr\rangle \1_{\bigl\langle X,\frac{x-y}{d_{xy}}\bigr\rangle > 0}=\frac{1}{2\pi}\int_{0}^{2\pi}\sin \varphi\, \1_{\sin\varphi >0}\,d\varphi=\frac{1}{\pi}
\end{align*}
for $d=2$ and for $d\geq 3$ 
\begin{align*}
\E \biggl \langle X,\frac{x-y}{d_{xy}}&\biggr\rangle \1_{\bigl\langle X,\frac{x-y}{d_{xy}}\bigr\rangle > 0} \\
&=\frac{\Gamma(d/2)}{2\pi^{d/2}}\int_0^{2\pi}\int_0^{\pi}\dots\int_0^{\pi}\cos\theta_{d-2}\prod_{k=1}^{d-2}\sin^k\theta_k\,\1_{\cos\theta_{d-2}>0}\,d\theta_{d-2}\dots d\theta_1 d\varphi\\
&=\frac{\Gamma(d/2)}{2\sqrt{\pi}\Gamma((d+1)/2)},
\end{align*}
where we used formulae $3.621.1, 3.621.5,8.384.1$, and $8.335.1$ in \cite{Ry65}. For $d=1$, one can do the same computation without spherical coordinates since the uniform distribution on $\mathbb{S}^0$ is just the two-point distribution on $\{-1,1\}$ which assigns both values probability $1/2$. For $x=y$, the probability $P(x\in H,y\notin H)$ is zero. Hence, we have for all $x,y\in M$ and $d\geq 1$
\begin{align*}
P(x\in H,y\notin H)=\frac{\Gamma(d/2)}{4\sqrt{\pi} C_M \Gamma((d+1)/2)}\,d_{xy}
\end{align*}
and formula \eqref{eq:px pxy rd} is then obtained from $P(x,y\in H)=P(x\in H)-P(x\in H, y \notin H)$.

In the case of the random set $B_{D/2}(Y)\cap M$, it follows from \eqref{eq:pxyinb ist covariogram} and \eqref{eq:pxyinb incomplete beta} that
\begin{align*}
P\bigl(x,y\in B_{D/2}(Y)\cap M\bigr)&=\frac{\1_{d_{xy}\leq a}}{a\Omega_2}\int_{d_{xy}}^a (d+1)t^dB_{1-d_{xy}^2/t^2}\biggl(\frac{d+1}{2},\frac{1}{2}\biggr)\, dt\\
&=\frac{\1_{d_{xy}\leq a}}{a\Omega_2}\int_{d_{xy}}^a \int_0^{1-d_{xy}^2/t^2}(d+1)t^d s^{(d-1)/2}(1-s)^{-1/2} \,ds\, dt.
\end{align*}
An application of Fubinis theorem yields for the integral
\begin{align*}
\int_0^{1-d_{xy}^2/a^2}&s^{(d-1)/2}(1-s)^{-1/2}\int_{d_{xy}/\sqrt{1-s}}^a (d+1)t^d \,dt \, ds\\
&=a^{d+1}B_{1-d_{xy}^2/a^2}\biggl(\frac{d+1}{2},\frac{1}{2}\biggr)-\int_0^{1-d_{xy}^2/a^2}d_{xy}^{d+1}s^{(d-1)/2}(1-s)^{-(d+2)/2}\,ds.
\end{align*}
The last integral is $0$ if $d_{xy}=0$, and we can write it as $d_{xy}^{d+1}B_{1-d_{xy}^2/a^2}\left((d+1)/2,-d/2\right)$ if $0<d_{xy}\leq a$. Collecting terms we obtain \eqref{eq: R^d uniform radius}.

Regarding \eqref{eq:R^d quadrat token}, the components of $Z=\bigl(Z^1,\dots,Z^d\bigr)$ are independent and uniformly distributed on $[-(R_k+a_k),R_k+a_k]$ and \eqref{eq:R^d quadrat token} follows from 
\begin{align*}
P(x,y\in E)=\prod_{k=1}^dP\bigl(|x_k-Z^k|\leq a_k, |y_k-Z^k|\leq a_k\bigr)
\end{align*}
and
\begin{align*}
P\bigl(|x_k-Z^k|\leq a_k, |y_k-Z^k|\leq a_k\bigr)=\frac{1}{2(R_k+a_k)}(2a_k-|x_k-y_k|)_+
\end{align*}
for $k=1,\dots,d$.
\end{proof}

Taking for example $d=2$ we obtain from \eqref{eq: R^d uniform radius} with $8.391$ in \cite{Ry65} and $7.3.2.210$ in \cite{Prud86}
\begin{align*}\label{eq: pxyinb formel gleichverteilt d gleich 2}
P\bigl(x,y\in B\bigr)&=\frac{2}{3\pi(2 C_M+a)^2}\Biggl(a^2\arccos\frac{d_{xy}}{a}\\
&\hspace{4em}-2d_{xy}\sqrt{a^2-d_{xy}^2}+\frac{d_{xy}^3}{a}\artanh\sqrt{1-\frac{d_{xy}^2}{a^2}}  \Biggr)\1_{d_{xy}\leq a}.\numberthis
\end{align*}

Examples of correlation functions of mosaic random fields on bounded subsets $M$ of $\R^2$ which can be obtained from the combination of Proposition \ref{pro: WKen auf R^2} and Corollary \ref{KorollarmitSPezialfallen} are given in table~\ref{table R}. There we used repeatedly the fact, that the probability generating function of a compound random variable of the form $N=\sum_{l=1}^L K_l$ with $\N_0$-valued and independent random variables $L, K_1, K_2,\dots$ and identically distributed $K,K_1,K_2,\dots$, is given by the composition of the probability generating functions of $L$ and $K$. Furthermore we used the following result.

\begin{lemma}
For every $\alpha\in (0,1]$ there exists an $\N$-valued random variable $K$ such that the probability generating function $\psi_K$ of $K$ is given by
\begin{align}
\label{eq:pgf alpha power}
\psi_K(t)=1-(1-t)^{\alpha},\quad t\in[-1,1].
\end{align}
\end{lemma}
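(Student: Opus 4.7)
The plan is to construct $K$ explicitly by reading off the power series coefficients of $1-(1-t)^\alpha$ and verifying that they form a probability distribution on $\mathbb{N}$. Using the generalized binomial series, for $|t|<1$ we have
\begin{align*}
1-(1-t)^\alpha = -\sum_{k=1}^\infty \binom{\alpha}{k}(-t)^k = \sum_{k=1}^\infty (-1)^{k+1}\binom{\alpha}{k}\, t^k,
\end{align*}
which suggests the candidate weights $p_k = (-1)^{k+1}\binom{\alpha}{k}$ for $k\in\N$.

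Next I would check that $p_k\geq 0$ for every $k\in\N$. For $k=1$ this is immediate since $p_1=\alpha\in(0,1]$. For $k\geq 2$, write
\begin{align*}
\binom{\alpha}{k}=\frac{\alpha(\alpha-1)(\alpha-2)\cdots(\alpha-k+1)}{k!},
\end{align*}
observe that since $\alpha\in(0,1]$ the factor $\alpha$ is positive while each of the $k-1$ remaining factors $\alpha-1,\alpha-2,\dots,\alpha-k+1$ is non-positive, so $\binom{\alpha}{k}$ carries the sign $(-1)^{k-1}$. Multiplying by $(-1)^{k+1}$ cancels this sign and yields $p_k=|\binom{\alpha}{k}|\geq 0$ (with $p_k=0$ for all $k\geq 2$ in the boundary case $\alpha=1$, which simply gives $K\equiv 1$).

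It remains to verify the normalization $\sum_{k=1}^\infty p_k = 1$. Since all $p_k$ are non-negative and the power series $\sum_{k\geq 1} p_k t^k$ converges to $1-(1-t)^\alpha$ on $(-1,1)$, Abel's theorem on monotone convergence of non-negative series gives $\sum_{k=1}^\infty p_k = \lim_{t\uparrow 1}\bigl(1-(1-t)^\alpha\bigr)=1$. Thus $(p_k,k\in\N)$ is a bona fide probability distribution on $\N$, and any random variable $K$ with $P(K=k)=p_k$ satisfies \eqref{eq:pgf alpha power} for $t\in(-1,1)$; extension to $t\in[-1,1]$ follows from absolute convergence of $\sum p_k t^k$ whenever $|t|\leq 1$, again by $p_k\geq 0$ and $\sum p_k<\infty$.

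No step is really a serious obstacle; the only point that requires care is the sign analysis of the generalized binomial coefficients, which is genuinely why the restriction $\alpha\in(0,1]$ is needed (for $\alpha>1$ not an integer, the factors $\alpha-j$ change sign as $j$ grows and the resulting $p_k$ fail to be non-negative).
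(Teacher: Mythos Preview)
Your proof is correct and follows essentially the same approach as the paper: identify the power-series coefficients of $1-(1-t)^\alpha$, verify their non-negativity via the sign pattern of the generalized binomial coefficients (the paper writes these equivalently in terms of Gamma functions), and check that they form a probability distribution. Your version is in fact slightly more self-contained, since the paper cites a tabulated series identity for the sum while you obtain the normalization directly via an Abel/monotone-convergence argument.
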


\begin{proof}
If $\alpha=1$, we can take a random variable $K$ which is almost surely equal to~$1$. In case $0<\alpha<1$, we define the distribution of $K$ by the probability mass function 
\begin{align}\label{Ableitungen}
p_k=-\frac{\Gamma(k-\alpha)}{\Gamma(-\alpha)\Gamma(k+1)},\quad k\geq 1.
\end{align}
Using the functional equation of the Gamma function we obtain $p_1=\alpha$ and $p_k=\alpha(1-\alpha)\dots (k-1-\alpha)/k!$ for all $k\geq 2$, thus $0\leq p_k\leq 1$ for all $k\geq 1$. By formula $7.3.1.27$ in \cite{Prud86} we have for all $t\in[-1,1]$ \begin{align*}
\sum_{k=1}^{\infty} p_k t^k=1-\sum_{k=0}^{\infty}\frac{\Gamma(k-\alpha)}{\Gamma(-\alpha)}\frac{t^k}{k!}=1-(1-t)^{\alpha},
\end{align*}
hence equations \eqref{Ableitungen} define a probability measure on $\N$ and the probability generating function of this measure is given by \eqref{eq:pgf alpha power}.
\end{proof}

\begin{sidewaystable}
\begin{center}
\renewcommand{\arraystretch}{1.5}
\begin{tabular}{l l l l l l}
\hline
Correlation function on $M\subseteq\R^2$ & Model  & $B$ & $N$ & $U$ & Lit. \\ \hline 
1. $\rho(x,y)=e^{-\left(d_{xy}/c_1\right)^{\alpha}}$ & SM  & $H(X,R)$ & $\sum_{l=1}^{L_1} K_l$ & &  \cite{Chi09, Lantue02} \\
2. $\rho(x,y)=\bigl(1-\bigl(\frac{d_{xy}}{c_2}\bigr)^{\alpha}\bigr)^n$ & SM & $H(X,R)$ & $\sum_{l=1}^{L_2} K_l$ & &\cite{PorMonSch12}* \\
3. $\rho(x,y)=\bigl(1+\bigl(\frac{d_{xy}}{c_1}\bigr)^{\alpha}\bigr)^{-\beta/\alpha}$ & SM  & $H(X,R)$ & $\sum_{l=1}^{L_3} K_l$ & & \cite{GneiSchl04} \\
4. $\rho(x,y)=\lambda_1\bigl(1-\frac{d_{xy}}{\pi C_M}\bigr)+(1-\lambda_1)$  & RT  & $H(X,R)$ & $\operatorname{Geo}\bigl(\frac{\lambda_1}{2(2-\lambda_1)}\bigr)$ & $\mathcal{N}\bigl(\frac{1}{\sqrt{2}},\frac{1}{2}\bigr)$ & \cite{Chi09, Lantue02}*\\
5. $\rho(x,y)=\lambda_2 \bigl(1-\frac{d_{xy}}{\pi C_M}\bigr)e^{-d_{xy}/c_1}+(1-\lambda_2)\bigl(1-\frac{d_{xy}}{\pi C_M}\bigr)$ & MRT  & $H(X,R)$ & $\operatorname{Poi}\bigl(\frac{\pi C_M}{c_1}\bigr)$ & $\mathcal{N}\bigl(1,\frac{\lambda_2}{1-\lambda_2}\bigr)$ & \cite{Chi09, Lantue02}*\\
6. $\rho(x,y)=1-2^{1-\alpha}\frac{d_{xy}/(\pi C_M)}{\bigl(1+d_{xy}/(\pi C_M)\bigr)^{1-\alpha}}$ & DL  & $H(X,R)$ & $K$ & & \eqref{eq:dead leaves corr} \\
7. $\rho(x,y)=\Bigl(\frac{2}{\pi}\arccos\frac{d_{xy}}{a}-\frac{2 }{\pi a^2}d_{xy}\sqrt{a^2-d_{xy}^2}\Bigr)\1_{d_{xy}\leq a}$ & RT  &  $B_{a/2}(Y)$ & $\operatorname{Poi}$ & &  \cite{Chi09, Lantue02} \\
8. $\rho(x,y)=\Bigl(1-\frac{3d_{xy}}{2a}+\frac{d_{xy}^3}{2a^3}\Bigr)\1_{d_{xy}\leq a}$ & RT  & $B_{D_1/2}(Y)$ & $\operatorname{Poi}$ & & \cite{Siro80}\\
9. $\rho(x,y)=\biggl(\frac{2}{\pi}\arccos\frac{d_{xy}}{a}-\frac{4}{\pi a^2}d_{xy}\sqrt{a^2-d_{xy}^2}$ & RT  & $B_{D_2/2}(Y)$ & $\operatorname{Poi}$ & & \eqref{eq: pxyinb formel gleichverteilt d gleich 2} \\
\hspace{\mylength}\hspace{3em}$+\frac{2}{\pi a^3}d_{xy}^3\artanh\sqrt{1-\frac{d_{xy}^2}{a^2}}\biggr)\1_{d_{xy}\leq a}$   \\
10. $\rho(x,y)=\frac{1}{4a_1a_2}\left(2a_1-| x_1-y_1|\right)_+\left(2a_2-| x_2-y_2|\right)_+$ & RT  & $E(Z)$ & $\operatorname{Poi}$ &  & \cite{Chi09, Lantue02}*  \\
\hline
\end{tabular}
\caption{Examples of correlation functions of mosaic random fields. Here $x$ and $y$ are points in a bounded subset $M$ of $\R^2$ and $d_{xy}=\|x-y\|$. The distributions of the random variables are as follows: $X\sim\mathcal{U}(\mathbb{S}^1)$, $R\sim\mathcal{U}\bigl([-C_M,C_M]\bigr)$ for a $C_M>0$ such that $M\subseteq B_{C_M}(0)$, $Y\sim\mathcal{U}\bigl(B_{C_M+a/2}(0)\bigr)$, $D_1$ has the distribution function \eqref{eq: Alfaros distribution function}, $D_2\sim\mathcal{U}\bigl([0,a]\bigr)$, $Z\sim\mathcal{U}\bigl(\prod_{k=1}^2 [-(R_k+a_k),R_k+a_k]\bigr)$ for $R_1,R_2>0$ such that $M\subseteq \prod_{k=1}^2[-R_k,R_k]$, the random variables $K,K_l$ are i.i.d.\ with the probability generating function defined in \eqref{eq:pgf alpha power}, $L_1\sim\operatorname{Poi}\bigl((\pi C_M/c_1)^{\alpha}\bigr)$, $L_2\sim\operatorname{Bin}\bigl(n,(\pi C_M/c_2)^{\alpha}\bigr)$, $L_3\sim\operatorname{NegBin}\bigl(\beta/\alpha,(1+(\pi C_M/c_1)^{\alpha})^{-1}\bigr)$. The parameters have the following range: $\alpha\in(0,1]$, $\beta,c_1,a,a_1,a_2>0$, $c_2\geq \pi C_M$, $\lambda_1\in(0,1]$, $\lambda_2\in(0,1)$, $n\in\N$. Wherever an entry is left blank the distribution of the corresponding random variable is arbitrary. The symbol $\operatorname{Poi}$ indicates, that the corresponding random variable is Poisson distributed, but the parameter of the Poisson distribution is arbitrary. A  \textquotesingle *\textquotesingle{} at the reference indicates that the given correlation function is new, but can be obtained as convex combinations or products of known correlation functions.} \label{table R}
\end{center}
\end{sidewaystable}

\section{Explicit Formulae for $M=\S$}\label{sec:sphere}

In this section we let $M=\mathbb{S}^d$ be the $d$-dimensional unit sphere, $\sigma_d$ the surface measure on $\mathbb{S}^d$ defined in \eqref{eq:surface measure sphere}, and $\phi_d$ the spherical coordinate map defined in \eqref{eq:sphaerische koordinaten}. Furthermore, we denote the geodesic metric or great circle metric on $\mathbb{S}^{d}$ by $d_{\mathbb{S}^d}(x,y)=\arccos\langle x,y\rangle$, $x,y\in\mathbb{S}^d$. 

Let $B_r(x)=\{z\in\mathbb{S}^d: d_{\mathbb{S}^d}(x,z)\leq r\}$ denote a closed ball or spherical cap on $\mathbb{S}^d$, centered at $x\in\mathbb{S}^d$ and with radius $r\in[0,\pi]$. Let $(X_n,n\in\N)$ be an independent sequence of random variables uniformly distributed on $\mathbb{S}^d$ (e.g., \cite{Mull56, Mar72}) and let $(R_n,n\in\N)$ be an i.i.d.\ sequence of random variables with values in $[0,\pi]$, independent of $(X_n,n\in\N)$. Then $B_n=B_{R_n}(X_n)$ defines an i.i.d.\ sequence of random closed sets in $\mathbb{S}^d$. As in the previous section, we have
\begin{align*}
P(x,y\in B)=\frac{\Gamma((d+1)/2)}{2\pi^{(d+1)/2}}\E\sigma_d\bigl(B_R(x)\cap B_R(y)\bigr),
\end{align*}
i.e.\ $P(x,y\in B)$ is proportional to the mean surface volume of the intersection of two spherical caps with random but equal radius.

For a deterministic radius $R=r\in[0,\pi]$ and $d=1$, an elementary geometric consideration yields
\begin{align*}
P\bigl(x,y\in B_r(X)\bigr)=\biggl(\frac{r}{\pi}-\frac{d_{\mathbb{S}^1}(x,y)}{2\pi}\biggr)_+,\quad x,y\in\mathbb{S}^1.
\end{align*}
Tovchigrechko and Vakser \cite{Tov01} used spherical trigonometry to obtain a formula for $\sigma_d\bigl(B_r(x)\cap B_r(y)\bigr)$ in case $d=2$, which results in
\begin{align*}\label{eq: Tovchi formel}
P\bigl(x,y\in B_r(X)\bigr)=\biggl(&\frac{1}{2\pi}\arccos\frac{\cos^2 r-\cos d_{\mathbb{S}^2}(x,y)}{\sin^2 r}\\
&\hspace{2em}-\frac{\cos r}{\pi}\arccos\frac{\cos r(1-\cos d_{\mathbb{S}^2}(x,y))}{\sin r\sin d_{\mathbb{S}^2}(x,y)}\biggr)\1_{d_{\mathbb{S}^2}(x,y)\leq 2r}\numberthis
\end{align*}
for all $x\neq y\in\mathbb{S}^2$ and $r\in(0,\pi/2]$. For higher dimension, Estrade and Istas \cite{Est10} provide the recursive formula 
\begin{align}\label{eq:Estrade}
\sigma_d\bigl(B_r(x)\cap B_r(y)\bigr)=\int_{-\sin r}^{\sin r}(1-a^2)^{(d-2)/2}\sigma_{d-1}\bigl(B_{r(a)}(x')\cap B_{r(a)}(y')\bigr)\,da
\end{align}
for all $d\geq 2$, $x,y\in\mathbb{S}^d$, and $r\in[0,\pi/2]$, where $r(a)=\arccos\bigl(\cos r/\sqrt{1-a^2}\bigr)$, and $x',y'$ are arbitrary points in $\mathbb{S}^{d-1}$ satisfying $d_{\mathbb{S}^d}(x,y)=d_{\mathbb{S}^{d-1}}(x',y')$ (there appears to be a misprint in \cite{Est10} regarding formula \eqref{eq:Estrade}). This recursion is particularly useful if the balls are hemispheres, i.e.\ $r=\pi/2$, yielding for all $d\geq 1$
\begin{align*}
P\bigl(x,y\in B_{\pi/2}(X)\bigr)= \frac{1}{2}-\frac{d_{\mathbb{S}^d}(x,y)}{2\pi},\quad x,y\in\mathbb{S}^d.
\end{align*}

From these formulae it is possible to compute $P(x,y\in B)$ for a discretely distributed radius $R$, although the formulae become quickly lengthy. In what follows we consider a family of continuous distributions for $R$ which results in rather simple formulae for $P(x,y\in B)$. A hyperplane in $\R^{d+1}$ that intersects $\mathbb{S}^d$ divides $\mathbb{S}^d$ into two spherical caps. If $r\in[0,\pi]$ is the radius of one such spherical cap, the distance of the hyperplane to the origin is given by the absolute value of $\cos r$. We assume henceforth, that $\cos R$ is continuously distributed with a distribution function of the form
\begin{align}\label{eq: Vertfkt cos R}
F_Q(t)=\biggl(\frac{1}{2}+\sum_{q=0}^Q p_q t^{2q+1}\biggr)\1_{[-1,1]}(t)+\1_{(1,\infty)}(t),\quad t\in\R,
\end{align}
for $Q\in\N_0$ and $p_0,\dots,p_Q\in\R_+$ with $\sum_{q=0}^Q p_q=1/2$. If $Q=0$ and $p_0=1/2$, this is the distribution function of the uniform distribution on $[-1,1]$. 

\begin{proposition}\label{example pxy formel}
Assume that $\cos R$ is continuously distributed with the distribution function $F_Q$ given in \eqref{eq: Vertfkt cos R} and set $d_{xy}=d_{\mathbb{S}^d}(x,y)$. Then for all $d\geq 1$ and all $x,y\in\mathbb{S}^d$
\begin{align}\label{eq: pxy formel mit cos R}
P(x,y\in B)=\frac{1}{2}-\sum_{q=0}^Q\sum_{l=1}^{q+1}p_q C_{q,l,d}\sin^{2l-1}\frac{d_{xy}}{2}\cos^{2(q-l+1)}\frac{d_{xy}}{2}
\end{align}
with
\begin{align*}
C_{q,l,d}=2^{-(2q+1)}\frac{\Gamma(2q+2)\Gamma((d+1)/2)}{\Gamma((2l+1)/2)\Gamma(q-l+2)\Gamma((2q+d+2)/2)}
\end{align*}
holds true.
\end{proposition}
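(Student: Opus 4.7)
My plan is to translate $\{x,y\in B\}$ into an inequality on $\cos R$, average $R$ out via the polynomial distribution function $F_Q$, and then compute the resulting mixed moment on $\mathbb{S}^d$ in a rotation-adapted orthonormal basis.

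I would first note that $z\in B_r(X)$ iff $\langle z,X\rangle\geq\cos r$, so $\{x,y\in B\} = \{\cos R \leq W\}$ with $W := \min(\langle x,X\rangle,\langle y,X\rangle)\in[-1,1]$. Independence of $X$ and $R$ and the polynomial form of $F_Q$ give
\begin{align*}
P(x,y\in B)=\E F_Q(W)=\frac{1}{2}+\sum_{q=0}^{Q} p_q\,\E W^{2q+1}.
\end{align*}
Next, I introduce the orthonormal pair $u=(x+y)/\|x+y\|$ and $v=(x-y)/\|x-y\|$ (orthogonal because $\|x\|=\|y\|=1$), for which $\|x+y\|=2\cos(d_{xy}/2)$ and $\|x-y\|=2\sin(d_{xy}/2)$. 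The identity $\min(a,b)=(a+b)/2-|a-b|/2$ then yields
\begin{align*}
W=\cos(d_{xy}/2)\,\langle u,X\rangle-\sin(d_{xy}/2)\,|\langle v,X\rangle|.
\end{align*}
Binomially expanding $W^{2q+1}$ and using the invariance $X\overset{d}{=}-X$ to kill every term carrying an odd power of $\langle u,X\rangle$ leaves only the odd binomial indices $k=2l-1$, $l=1,\dots,q+1$, whence
\begin{align*}
\E W^{2q+1}=-\sum_{l=1}^{q+1}\binom{2q+1}{2l-1}\cos^{2(q-l+1)}(d_{xy}/2)\sin^{2l-1}(d_{xy}/2)\,M_{q-l+1,l,d},
\end{align*}
where $M_{m,l,d}:=\E\bigl[\langle u,X\rangle^{2m}|\langle v,X\rangle|^{2l-1}\bigr]$.

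It remains to evaluate $M_{m,l,d}$ and to match the constant with $C_{q,l,d}$. By rotational invariance I may take $u=e_1$, $v=e_2$; for $d\geq 2$, conditioning on $X_1$ uses that $(X_2,\dots,X_{d+1})/\sqrt{1-X_1^2}$ is uniform on $\mathbb{S}^{d-1}$, so $|\langle v,X\rangle|^{2l-1}=(1-X_1^2)^{(2l-1)/2}|Y_1|^{2l-1}$ with $Y_1$ a coordinate of a uniform point on $\mathbb{S}^{d-1}$. Both $\E|Y_1|^{2l-1}$ and $\E[X_1^{2m}(1-X_1^2)^{(2l-1)/2}]$ reduce to Beta integrals via the explicit marginal densities, giving
\begin{align*}
M_{m,l,d}=\frac{\Gamma((d+1)/2)\,\Gamma(l)\,\Gamma(m+1/2)}{\pi\,\Gamma(m+l+d/2)};
\end{align*}
the $d=1$ case is handled directly by a trigonometric integral (the ambient law being supported on the circle) and yields the same closed form. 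Substituting back and applying the Legendre duplication formula $\Gamma(2z)=\pi^{-1/2}2^{2z-1}\Gamma(z)\Gamma(z+1/2)$ to both $\Gamma(2l)$ and $\Gamma(2q-2l+3)$ in $\binom{2q+1}{2l-1}=\Gamma(2q+2)/[\Gamma(2l)\Gamma(2q-2l+3)]$ collapses the coefficient to exactly $C_{q,l,d}$, producing \eqref{eq: pxy formel mit cos R}. The main obstacle is this last algebraic bookkeeping---tracking the half-integer Gamma values introduced by duplication so that the final constant matches the stated form; the degenerate configurations $x=y$ and $x=-y$ (where $u$ is undefined) are handled by continuity, or equivalently by running the argument through the $v$-decomposition alone.
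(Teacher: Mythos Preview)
Your argument is correct and complete, but it is organized differently from the paper's. The paper starts from $P(x,y\in B)=\tfrac12-P(x\in B,\,y\notin B)$, picks concrete points $x_\pm\in\mathbb{S}^d$ in spherical coordinates with $d_{\mathbb{S}^d}(x_+,x_-)=d_{xy}$, integrates out $R$ to get $\sum_q p_q\bigl(\langle x_+,z\rangle^{2q+1}-\langle x_-,z\rangle^{2q+1}\bigr)$ on the half-sphere $\{\langle x_+-x_-,z\rangle\geq 0\}$, expands the difference of odd powers binomially (which isolates exactly the odd-index terms, just as your parity argument does), and then evaluates the remaining trigonometric integrals coordinate by coordinate via standard tables. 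Your route replaces the complement by the $\min$-identity, so the half-sphere indicator becomes the absolute value $|\langle v,X\rangle|$; and instead of integrating in spherical coordinates you use the probabilistic fact that, conditional on one coordinate, the remaining coordinates of a uniform point on $\mathbb{S}^d$ form a scaled uniform point on $\mathbb{S}^{d-1}$, reducing $M_{m,l,d}$ to two Beta integrals. Your approach is more coordinate-free and packages the moment computation neatly; the paper's explicit parametrization, on the other hand, handles the endpoints $d_{xy}\in\{0,\pi\}$ without a separate continuity argument (a small slip in your last line: at $x=y$ it is $v$, not $u$, that degenerates, while $u$ degenerates at $x=-y$; your continuity remark still covers both). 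Both proofs require a separate check for $d=1$.
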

\begin{proof}
The distribution function $F_Q$ in \eqref{eq: Vertfkt cos R} fulfills $F_Q(t)+F_Q(-t)=1$ for all $t\in\R$, which is equivalent to $\cos R\overset{d}{=}-\cos{R}$ or $R\overset{d}{=}\pi-R$. With the symmetry of $X$ and the definition of $d_{\mathbb{S}^d}$ this gives for all $x\in\mathbb{S}^d$
\begin{align*}
P(d_{\mathbb{S}^d}(x,X)\leq R)=P(d_{\mathbb{S}^d}(x,-X)\leq R)=P(d_{\mathbb{S}^d}(x,X)\geq \pi-R)=P(d_{\mathbb{S}^d}(x,X)> R)
\end{align*}
and consequently $P(x\in B)=1/2$. Thus 
\begin{align}\label{eq: pxy formel im beweis sphare}
P(x,y\in B)=\frac{1}{2}-P(x\in B,y\notin B)=\frac{1}{2}-P\bigl(d_{\mathbb{S}^d}(x,X)\leq R < d_{\mathbb{S}^d}(y,X)\bigr).
\end{align}
The surface measure \eqref{eq:surface measure sphere} is rotational invariant and we can therefore replace $x$ and $y$ in \eqref{eq: pxy formel im beweis sphare} by any points $x_+,x_-\in\mathbb{S}^d$, which satisfy $d_{\mathbb{S}^d}(x_+,x_-)=d_{xy}$. A convenient choice is
\begin{align}\label{eq:wahl von xplusminus}
x_{\pm}=\Phi_d\biggl(\pi\mp\frac{\pi}{2},\frac{\pi}{2},\dots,\frac{\pi}{2},\frac{d_{xy}}{2}\biggr)=\biggl(0,\pm\sin\frac{d_{xy}}{2},0,\dots,0,\cos\frac{d_{xy}}{2}\biggr).
\end{align}
By independence of $X$ and $R$ we have
\begin{align*}
P\bigl(d_{\mathbb{S}^d}(x_+,X)&\leq R < d_{\mathbb{S}^d}(x_-,X)\bigr)\\
&=\int_{\mathbb{S}^d}P\bigl(d_{\mathbb{S}^d}(x_+,z)\leq R < d_{\mathbb{S}^d}(x_-,z)\bigr)\1_{d_{\mathbb{S}^d}(x_+,z)\leq d_{\mathbb{S}^d}(x_-,z)}\,d\hat{\sigma_d}(z)\\
&=\int_{\mathbb{S}^d}P\bigl(\langle x_-,z\rangle< \cos R \leq  \langle x_+,z\rangle\bigr)\1_{\langle x_+-x_-,z\rangle\geq 0}\,d\hat{\sigma_d}(z)\\
&=\sum_{q=0}^Q p_q\int_{\mathbb{S}^d}\bigl(\langle x_+,z\rangle^{2q+1} -  \langle x_-,z\rangle^{2q+1}\bigr)\1_{\langle x_+-x_-,z\rangle\geq 0}\,d\hat{\sigma_d}(z).
\end{align*}
Passing to spherical coordinates \eqref{eq:sphaerische koordinaten}, the difference $\langle x_+,z\rangle^{2q+1} -  \langle x_-,z\rangle^{2q+1}$ becomes
\begin{align*}
\sum_{l=0}^{2q+1}&\binom{2q+1}{l}\bigl(1-(-1)^{l}\bigr)\sin^l\frac{d_{xy}}{2}\sin^l\varphi\prod_{i=1}^{d-1}\sin^l\theta_i\cos^{2q+1-l}\frac{d_{xy}}{2}\cos^{2q+1-l}\theta_{d-1}\\
&\hspace{-8pt}= 2\sum_{l=1}^{q+1}\binom{2q+1}{2l-1}\sin^{2l-1}\frac{d_{xy}}{2}\sin^{2l-1}\varphi\prod_{i=1}^{d-1}\sin^{2l-1}\theta_i\cos^{2(q-l+1)}\frac{d_{xy}}{2}\cos^{2(q-l+1)}\theta_{d-1}.
\end{align*}
Furthermore, the condition $\langle x_+-x_-,z\rangle\geq 0$ becomes in spherical coordinates $\varphi\in[0,\pi]$, and this in fact explains the choice of $x_{\pm}$ in \eqref{eq:wahl von xplusminus}. Altogether we obtain
\begin{align*}
P&\bigl(d_{\mathbb{S}^d}(x_+,X)\leq R < d_{\mathbb{S}^d}(x_-,X)\bigr)\\
&=\sum_{q=0}^Q\sum_{l=1}^{q+1} p_q\frac{\Gamma((d+1)/2)}{\pi^{(d+1)/2}}\binom{2q+1}{2l-1}\sin^{2l-1}\frac{d_{xy}}{2}\cos^{2(q-l+1)}\frac{d_{xy}}{2}\\
&\hspace{0.95em}\times \int_0^{\pi}\sin^{2l-1}\varphi \,d\varphi\prod_{i=1}^{d-2}\int_0^{\pi}\sin^{2l-1+i}\theta_i\, d\theta_i\int_0^{\pi}\sin^{2l+d-2}\theta_{d-1}\cos^{2(q-l+1)}\theta_{d-1}\,d\theta_{d-1}.
\end{align*}
For the last integral we can use formulae $3.621.5$ and $8.384.1$ in \cite{Ry65} because the exponent of the cosine is even. The other integrals can be evaluated with formulae $3.621.1, 8.384.1$, and $8.335.1$ in \cite{Ry65}. We get
\begin{align*}
P&\bigl(d_{\mathbb{S}^d}(x_+,X)\leq R < d_{\mathbb{S}^d}(x_-,X)\bigr)\\
&\hspace{-3pt}=\sum_{q=0}^Q\sum_{l=1}^{q+1} p_q\binom{2q+1}{2l-1}\frac{\Gamma((d+1)/2)\Gamma(l)\Gamma((2q-2l+3)/2)}{\pi\Gamma((2q+d+2)/2)}\sin^{2l-1}\frac{d_{xy}}{2}\cos^{2(q-l+1)}\frac{d_{xy}}{2}.
\end{align*}
Writing the binomial coefficient in terms of the Gamma function and using $8.335.1$ in \cite{Ry65} two times we find formula \eqref{eq: pxy formel mit cos R}.
\end{proof}

For example if $d=2$, $Q=0$, and $p_0=1/2$,  i.e.\ the random variable $\cos R$ is uniformly distributed on $[-1,1]$, Proposition \ref{example pxy formel} yields
\begin{align*}
P(x,y\in B)=\frac{1}{2}-\frac{1}{4}\sin\frac{d_{\mathbb{S}^2}(x,y)}{2},
\end{align*}
while the choice $Q=1$, $p_0=0$, and $p_1=1/2$, results in 
\begin{align}\label{eq: pxy formel etwas kompl}
P(x,y\in B)=\frac{1}{2}-\frac{3}{16}\sin\frac{d_{\mathbb{S}^2}(x,y)}{2}\cos^2\frac{d_{\mathbb{S}^2}(x,y)}{2}-\frac{1}{8}\sin^3\frac{d_{\mathbb{S}^2}(x,y)}{2}.
\end{align}
Plugging in these formulae in the formulae of Corollary \ref{KorollarmitSPezialfallen}, we get correlation functions of submodels of the mosaic random field \eqref{allgFeld} on $\mathbb{S}^2$. More examples for the two-dimensional sphere can be found in table \ref{table sphere}.

\begin{sidewaystable}
\begin{center}
\renewcommand{\arraystretch}{1.5}
\begin{tabular}{l l l l l l l}
\hline
Correlation functions on $\mathbb{S}^2$  & Model & $R$ & $N$ & $U$ & Lit.\\ \hline 
1. $\rho(x,y)=e^{-\left(d_{xy}/c\right)^{\alpha}}$ & SM & $\frac{\pi}{2}$ & $\sum_{l=1}^{L_1} K_l$ & & \cite{Gnei13} \\
2. $\rho(x,y)=\bigl(1+\bigl(\frac{d_{xy}}{c}\bigr)^{\alpha}\bigr)^{-\beta/\alpha}$ & SM & $\frac{\pi}{2}$ & $\sum_{l=1}^{L_2}K_l$ & & \cite{Gnei13} \\
3. $\rho(x,y)=\lambda_1\bigl(1-\frac{d_{xy}}{\pi}\bigr)+(1-\lambda_1)$ & RT & $\frac{\pi}{2}$ & $\operatorname{Geo}\bigl(\frac{\lambda_1}{2(2-\lambda_1)}\bigr)$ & $\mathcal{N}\bigl(\frac{1}{\sqrt{2}},\frac{1}{2}\bigr)$ & \cite{Gnei13}* \\
4. $\rho(x,y)=\lambda_2\bigl(1-\frac{d_{xy}}{\pi}\bigr)e^{-d_{xy}/c}+(1-\lambda_2)\bigl(1-\frac{d_{xy}}{\pi}\bigr)$ & MRT & $\frac{\pi}{2}$ & $\operatorname{Poi}\bigl(\frac{\pi}{c}\bigr)$ & $\mathcal{N}\bigl(1,\frac{\lambda_2}{1-\lambda_2}\bigr)$ &\cite{Gnei13}* \\
5. $\rho(x,y)=1-2^{1-\alpha}\frac{d_{xy}/\pi}{(1+d_{xy}/\pi)^{1-\alpha}}$ & DL & $\frac{\pi}{2}$ & $K$ &  & \eqref{eq:dead leaves corr} \\
6. $\rho(x,y)=\1_{d_{xy}=0}+\frac{1}{\pi(1-\cos r)}\Bigl(\arccos\frac{\cos^2r-\cos d_{xy}}{\sin^2 r}$ & RT & $r\in(0,\frac{\pi}{2}]$ & $\operatorname{Poi}$ & & \cite{Tov01} \\
\hspace{\mylengthzwei}$-2\cos r\arccos\frac{\cos r(1-\cos d_{xy})}{\sin r\sin d_{xy}}\Bigr)\1_{0<d_{xy}\leq 2r}$  \\
7. $\rho(x,y)=e^{-\left(\sin(d_{xy}/2)/c\right)^{\alpha}}$ & SM & $\cos R\sim\mathcal{U}\bigl([-1,1]\bigr)$ & $\sum_{l=1}^{L_3} K_l$ & & \cite{Gnei13, Yad83} \\
8. $\rho(x,y)=\bigl(1+\bigl(\frac{1}{c}\sin\frac{d_{xy}}{2}\bigr)^{\alpha}\bigr)^{-\beta/\alpha}$ & SM & $\cos R\sim\mathcal{U}\bigl([-1,1]\bigr)$ & $\sum_{l=1}^{L_4} K_l$ & & \cite{Gnei13, Yad83} \\
9. $\rho(x,y)=\lambda_1\bigl(1-\frac{1}{2}\sin\frac{d_{xy}}{2}\bigr)+(1-\lambda_1)$ & RT & $\cos R\sim\mathcal{U}\bigl([-1,1]\bigr)$ & $\operatorname{Geo}\bigl(\frac{\lambda_1}{2(2-\lambda_1)}\bigr)$ & $\mathcal{N}\bigl(\frac{1}{\sqrt{2}},\frac{1}{2}\bigr)$ & \cite{Gnei13}* \\
10. $\rho(x,y)=1-\frac{1}{4}\sin^3\frac{d_{xy}}{2}-\frac{3}{8}\sin\frac{d_{xy}}{2}\cos^2\frac{d_{xy}}{2}$ & RT & see \eqref{eq: Vertfkt cos R} & $\operatorname{Poi}$ & & \eqref{eq: pxy formel etwas kompl}  \\
11. $\rho(x,y)=1-\bigl(\frac{1}{2}\sin\frac{d_{xy}}{2}\bigr)^{\alpha}$ & SM & $\cos R\sim\mathcal{U}\bigl([-1,1]\bigr)$ & $K$ & & \cite{Gnei13}*\\
\hline
\renewcommand{\arraystretch}{1}
\end{tabular}
\caption{Examples of correlation functions of mosaic random fields on $\mathbb{S}^2$. Here $x$ and $y$ are points in $\S$ and $d_{xy}=d_{\mathbb{S}^2}(x,y)$ is the great circle distance. The random sets are closed balls of radius $R$ centered at $X$. The distribution of the random variables are as follows: $X\sim\mathcal{U}(\mathbb{S}^2)$, $L_1\sim\operatorname{Poi}\bigl((\pi/c)^{\alpha}\bigr)$, $L_2\sim\operatorname{NegBin}\bigl(\beta/\alpha,(1+(\pi/c)^{\alpha})^{-1}\bigr)$, $L_3\sim\operatorname{Poi}\bigl((2/c)^{\alpha}\bigr)$, $L_4\sim\operatorname{NegBin}\bigl(\beta/\alpha,(1+(2/c)^{\alpha}\bigr)^{-1})$, the random variables $K,K_l$ are i.i.d.\ with the probability generating function defined in \eqref{eq:pgf alpha power}. The parameters have the following range: $\alpha\in(0,1]$, $\beta,c>0$, $\lambda_1\in(0,1]$, $\lambda_2\in(0,1)$. Wherever an entry is left blank the distribution of the corresponding random variable is arbitrary. The symbol $\operatorname{Poi}$ indicates, that the corresponding random variable is Poisson distributed, but the parameter of the Poisson distribution is arbitrary. A \textquotesingle *\textquotesingle{} at the reference indicates that the given correlation function is new, but can be obtained as convex combinations or products of known correlation functions.}\label{table sphere}
\end{center}
\end{sidewaystable}

\section{Cylinder and Torus}\label{sec:Torus}
To conclude we give a short excursion to two more exotic spaces, cylinder and torus. Let $O=\mathbb{S}^1\times [0,h]$ be an open cylinder with radius $1$ and height $h>0$. Let $d_{\mathbb{S}^1}(x_1,y_1)=\arccos\langle x_1, y_1\rangle$, $x_1,y_1\in\mathbb{S}^1$, be the geodesic metric on $\mathbb{S}^1$. Then 
\begin{align*}
d_O\bigl((x_1,x_2),(y_1,y_2)\bigr)=\sqrt{d_{\mathbb{S}^1}^2(x_1,y_1)+|x_2-y_2|^2},\quad x_1,y_1\in\mathbb{S}^1,\, x_2,y_2\in[0,h],
\end{align*}
defines a metric on $O$. Fix $a\in(0,\pi]$, and let $(D_n,n\in\N)$ be a sequence of $[0,a]$-valued random variables, let $(U_n,n\in\N)$ be a sequence of uniformly distributed random variables on $[0,2\pi)$, and let $(V_n,n\in\N)$ be a sequence of uniformly distributed random variables on $[-a/2, h+a/2]$. Suppose all random variables above are independent. For $F(u,v)=\bigl(\cos u, \sin u, v\bigr)$, $u\in[0,2\pi)$, $v\in[-a/2,h+a/2]$, define $(X_n,n\in\N)$ by $X_n=F(U_n,V_n)$, $n\in\N$. Then $B_n=B_{D_n/2}(X_n)=\{z\in O : d_O(z,X_n)\leq D_n/2\}$ defines an i.i.d.\ sequence $(B_n,n\in\N)$ of random closed balls on $O$. Let $D$ be equal to some constant $t\in[0,a]$, such that a single ball does not intersect itself. Then
\begin{align*}
P(x,y\in B)&=P\bigl(X\in B_{t/2}(x)\cap B_{t/2}(y)\bigr)=\frac{\lambda^2\bigl(F^{-1}\bigl(B_{t/2}(x)\cap B_{t/2}(y)\bigr)\bigr)}{2\pi(h+a)}
\end{align*}
holds for all $x,y\in O$. The set $F^{-1}\bigl(B_{t/2}(x)\cap B_{t/2}(y)\bigr)$ is the intersection of two balls $B_{t/2}(\tilde{x})$ and $B_{t/2}(\tilde{y})$ in $\R^2$ with $\tilde{x},\tilde{y}\in [-t/2,2\pi+t/2)\times [0,h]$ and $\|\tilde{x}-\tilde{y}\|=d_O(x,y)$, where a part of this intersection which possibly exceeds the left or right boundary of $[0,2\pi)\times [-a/2,h+a/2]$ is reflected to the opposite side. The volume of the intersection of the balls does not change by this reflection and hence we can use \eqref{eq: Tovchi formel} to get
\begin{align}\label{eq:formel cylinder}
P(x,y\in B)=\frac{1}{4\pi(h+a)}\Bigl(t^2\arccos\frac{d_O(x,y)}{t}-d_O(x,y)\sqrt{t^2-d_O^2(x,y)}\Bigr)\1_{d_O(x,y)\leq t}.
\end{align}
Just as in section \ref{sec:Rd}, this formula can be integrated with respect to the distribution of $D$ in order to obtain $P(x,y\in B)$ for a random diameter of the ball $B$ and the resulting expressions are up to the normalization constant equal to \eqref{eq:spherical correlation} and \eqref{eq: R^d uniform radius} with $d_{xy}=d_O(x,y)$. Using for example the distribution function \eqref{eq: Alfaros distribution function} of Sironvalle for the diameter of the balls, we get for the corresponding random token field \eqref{RCfeld} with Poisson distributed number of balls the correlation function
\begin{align*}
\rho(x,y)=\Biggl(&1-\frac{3\sqrt{d_{\mathbb{S}^1}^2(x_1,y_1)+|x_2-y_2|^2}}{2a}\\&\hspace{4em}+\frac{(d_{\mathbb{S}^1}^2(x_1,y_1)+|x_2-y_2|^2)^{3/2}}{2a^3}\Biggr)\1_{\sqrt{d_{\mathbb{S}^1}^2(x_1,y_1)+|x_2-y_2|^2}\leq a}.
\end{align*}

The two-dimensional torus $\mathbb{T}^2=\mathbb{S}^1\times\mathbb{S}^1$ can be treated in a similar way. A convenient choice for the random closed ball here is $B=B_{D/2}(X)=\{z\in\mathbb{T}^2: d_{\mathbb{T}^2}(z,X)\leq D/2\}$ with $d_{\mathbb{T}^2}(x,y)=\sqrt{d_{\mathbb{S}^1}^2(x_1,y_1)+d_{\mathbb{S}^1}^2(x_2,y_2)}$, $x,y\in\mathbb{T}^2$. Here, we let $X=F(U,V)$ with the parametrization $F(u,v)=\bigl(\cos u,\sin u,\cos v,\sin v\bigr)$, $u,v\in[0,2\pi)$, the random variables $U$ and $V$ are uniformly distributed on $[0,2\pi)$, the diameter $D$ is a $[0,a]$-valued random variable for a cutoff $a\in[0,\pi]$, and all random variables are assumed to be independent.  For example, if the diameter $D$ is equal to some constant $t\in[0,a]$ and $x,y\in \mathbb{T}^2$, then the probability $P(x,y\in B)$ for $x,y\in \mathbb{T}^2$ is given by \eqref{eq:formel cylinder} where $d_{\mathbb{T}^2}$ replaces $d_{O}$ and the normalization constant is $1\big/\bigl(8\pi^2\bigr)$.

\vspace*{.5\baselineskip}\noindent
\textbf{Acknowledgement.} The authors gratefully acknowledge support by Deutsche For\-schungs\-ge\-mein\-schaft through the Research Training Group RTG 1953.


%
%
%
%

\bibliography{A_general_class_of_mosaic_random_fields}
\bibliographystyle{apt}

\end{document}